\DeclareMathAlphabet{\mathcal}{OMS}{cmsy}{m}{n}
\theoremstyle{plain}
\newtheorem{Thm}{Theorem}
\newtheorem*{Thms}{Theorems}
\newtheorem*{thm}{Theorem}
\newtheorem{Lem}[Thm]{Lemma}
\newtheorem{Cor}[Thm]{Corollary}
\theoremstyle{definition}
\newtheorem*{Def}{Definition}
\theoremstyle{remark}
\def\F{\mathbb{F}}
\def\cpb{\hbox{$\overline{{\mathbb{CP}}^2}$}}
\def\cpo{\hbox{${\mathbb{CP}}^1$}}
\def\cK{{\mathcal K}}
\def\BHM{Baykur, Hayano, and Monden }
\title{Unchaining Surgery, Branched Covers, and Pencils on Elliptic Surfaces}
\author{Terry Fuller}
\address{Department of Mathematics,
California State University, Northridge, 
Northridge, CA 91330 } 
\email{terry.fuller@csun.edu}
\begin{document}
\begin{abstract}
In \cite{Unchaining}, R. \.{I}nan\c{c} Baykur, Kenta Hayano, and Naoyuki Monden use a technique called {\em unchaining} to construct a family of simply connected symplectic 4-manifolds $X_g'(i)$, for all $g\geq 3$ and $0 \leq i \leq g-1$; among this family, the manifolds $X_g'(g-2)$ are shown to be symplectic Cababi-Yau 4-manifolds. They also show that each $X_g'(i) \# \cpb$ admits a pair of inequivalent genus~$g$ Lefschetz pencils. We show how to describe every $X_g'(i)$  as a 2-fold branched cover of a rational surface, and use this to prove that each $X_g'(i)$ is diffeomorphic to the elliptic surface $E(g-i)$. This has several notable consequences: (1) each symplectic Calabi-Yau they construct is diffeomorphic to K3; (2) for each $n \geq 3$ and $g\geq n$, the elliptic surface $E(n)$ admits a genus~$g$ Lefschetz pencil; and (3) for each $n \geq 3$ and $g\geq n$, the once blown up elliptic surface $E(n) \# \cpb$ admits a pair of inequivalent genus~$g$ Lefschetz pencils.
\end{abstract}
\maketitle
 
\section{Introduction}
Since the foundational work of Donaldson (\cite{Donaldson}) and Gompf (\cite{GompfStipsicz}) in the 1990s, Lefschetz pencils and fibrations are known to characterize symplectic 4-manifolds. In \cite{Unchaining}, R. \.{I}nan\c{c} Baykur, Kenta Hayano, and Naoyuki Monden construct a doubly indexed family of symplectic 4-manifolds $X_g'(i)$,  for all $g \geq 3$ and $0\leq i \leq g-1$. Their examples are constructed as the total spaces of symplectic genus $g$ Lefschetz pencils, through explicit factorizations of their monodromy. We review the specific factorizations which define $X_g'(i)$ below, but in the meantime summarize results from \cite{Unchaining} about these manifolds.

\begin{Thms}[\cite{Unchaining}] \label{UnchainingSummary}
For each $g\geq 3$ and $0 \leq i \leq g-1$, there is a genus $g$ Lefschetz pencil on $X_g'(i)$ with the following properties.
\begin{itemize}
\item[(a)] (\cite{Unchaining}, Lemma 4.7) The manifolds $X_g'(i)$ are simply connected, with Euler characteristic $e(X_g'(i))= 12(g-i)$, and signature $\sigma(X_g'(i))= -8(g-i)$.
\item[(b)] (\cite{Unchaining}, Lemma 5.6) The manifolds $X_g'(i)$ are spin if and only if $g-i$ is even.
\item[(c)]   (\cite{Unchaining}, Theorem 4.8) The manifolds $X_g'(g-1)$ are diffeomorphic to the rational elliptic surface $E(1)$.
\end{itemize}
\end{Thms}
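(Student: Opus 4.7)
The entire plan rests on the fact that each $X_g'(i)$ is defined by an explicit monodromy factorization of a genus $g$ Lefschetz pencil, produced by the unchaining procedure; every invariant below will be read off from that factorization. I will first blow up the $b$ base points to pass to a genuine Lefschetz fibration $\widetilde X_g'(i)\to S^2$, so that the topology is encoded by an ordered tuple of positive Dehn twists on $\S_g$ whose product equals the boundary multi-twist, and then translate back.

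For part~(a), simple connectivity follows from the handlebody picture of the pencil: $\pi_1$ of the total space is the surface group $\pi_1(\S_g)$ modulo the normal closure of the vanishing cycles together with the classes carried by sections through the base points. The unchaining vanishing cycles contain enough nonseparating curves — including a basis-like collection produced from the chain relations used to define the factorization — to kill $\pi_1(\S_g)$ outright. The Euler characteristic comes from $e(\widetilde X_g'(i))=4-4g+n$ with $n$ the number of vanishing cycles in the factorization, followed by subtracting $b$ to recover $e(X_g'(i))$; tallying $n$ and $b$ from the unchaining recipe should give $e=12(g-i)$. Signature I would compute with Endo's localization of the Meyer cocycle: each chain relation contributes a known signature defect, and summing these over the factorization yields $\sigma=-8(g-i)$.

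For part~(b), I would invoke Stipsicz's criterion: $\widetilde X_g'(i)$ is spin if and only if there is a $\Z/2$-quadratic refinement $q$ of the mod $2$ intersection form on $H_1(\S_g;\Z/2)$ vanishing on every vanishing cycle (with a prescribed value on the curves around base points). Because the unchaining vanishing cycles are arranged in explicit chains, this reduces to a linear condition on $q$ whose solvability toggles with the parity of the number of chain building blocks used, which by construction equals $g-i$; hence spin iff $g-i$ is even.

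The main obstacle is part~(c), upgrading a homeomorphism to a diffeomorphism with $E(1)$. The invariants of part~(a) together with Freedman and the symplectic structure already force $X_g'(g-1)$ to be homeomorphic to $\cp\# 9\,\cpb$, but the only reliable way I see to pin down the smooth structure in this framework is to transform the unchaining factorization of $X_g'(g-1)$ into a recognizable factorization for $E(1)$ via Hurwitz moves, global conjugation, and standard mapping-class-group relations (braid, chain, lantern). Concretely, the plan is to produce a genus $1$ Lefschetz fibration from the factorization — either by iteratively degenerating chains to reveal a torus fiber, or by identifying $(2g-2)$ disjoint exceptional sphere classes that can be blown down so that the remaining monodromy is a product of $12$ conjugates of a Dehn twist on the torus — after which Moishezon's uniqueness theorem for relatively minimal elliptic Lefschetz fibrations with twelve nodal fibers identifies $X_g'(g-1)$ with $E(1)$.
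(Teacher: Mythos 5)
This statement is not proved in the paper at all: it is a verbatim summary of results imported from \cite{Unchaining} (their Lemma 4.7, Lemma 5.6, and Theorem 4.8), so there is no internal proof to compare yours against. Judged on its own terms, your outline for (a) is the standard one and the Euler characteristic count does check out: the factorization (1) has $4g-4+2(i+1)+12(g-i)$ Dehn twists, giving $e=4-4g+n-2(i+1)=12(g-i)$ after deleting the $2(i+1)$ base points. But be careful with the signature: the fibrations $X_g(i)$ are \emph{not} hyperelliptic in the sense needed for Endo's closed formula, because the vanishing cycles $x$ and $x'$ are interchanged (not fixed) by the involution --- indeed the formula $-\frac{g+1}{2g+1}n$ fails to even give an integer here. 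A relator-by-relator computation in the style of Endo--Nagami (signature defects of the chain and lantern relations) does work, but your ``summing these should yield $-8(g-i)$'' is an assertion, not a computation, and the same goes for the parity analysis in (b).

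The genuine gap is part (c). Knowing $e=12$, $\sigma=-8$, $\pi_1=1$, non-spin gives only a \emph{homeomorphism} with $\IC\mathrm{P}^2\#9\cpb$; exotic symplectic manifolds in that homeomorphism type exist (Dolgachev surfaces, etc.), so Freedman plus ``the symplectic structure'' pins down nothing smooth. Your fallback --- transform the factorization by Hurwitz moves into a genus~$1$ factorization and invoke Moishezon --- is not an argument but a restatement of the problem: no sequence of moves is exhibited, and producing such an identification is exactly the hard content (it is, in essence, what the present paper accomplishes for \emph{all} $i$, and it requires the entire branched-cover and banded-unlink machinery of Sections 2--4, not elementary Hurwitz manipulation). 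The route actually taken in \cite{Unchaining} for Theorem 4.8 is different from either of your suggestions: the $2g$ explicit sections of the pencil allow a computation of the symplectic canonical class, showing $\kappa(X_g'(g-1))=-\infty$, after which the classification of simply connected symplectic $4$-manifolds of Kodaira dimension $-\infty$ (rational surfaces) identifies the manifold with $\IC\mathrm{P}^2\#9\cpb\cong E(1)$. Without either that classification input or an actually executed reduction of the monodromy, your proof of (c) does not close.
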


These statements suggest the result which is the main theorem of this paper.

\begin{Thm} \label{main}
The manifolds $X_g'(i)$ are diffeomorphic to the elliptic surface $E(g-i)$.
\end{Thm}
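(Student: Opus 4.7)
The plan is to realize each $X_g'(i)$ as a 2-fold branched cover of a rational surface and then match that presentation to the classical branched-cover description of the elliptic surface $E(g-i)$.

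First, I would examine the monodromy factorization defining the genus $g$ Lefschetz pencil on $X_g'(i)$ coming from the unchaining construction. Because chain-type relations in the mapping class group interact naturally with the hyperelliptic involution $\i$ on $\S_g$, one expects that the vanishing cycles can be chosen $\i$-invariant. Together with an $\i$-invariant configuration of base points for the pencil, this should yield a fiber-preserving involution $\tilde{\i}$ on the blow-up of $X_g'(i)$ at its base locus, and hence an involution on $X_g'(i)$ itself.

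Second, I would identify the quotient $Y = X_g'(i)/\tilde{\i}$ as a rational surface and describe the branch locus $B \subset Y$. Since the fixed-point set of $\i$ on a generic fiber consists of $2g+2$ points, $B$ meets each fiber of the induced ruling on $Y$ in $2g+2$ points. Combining the Euler characteristic and signature formulas for 2-fold branched covers with the identities $e(X_g'(i)) = 12(g-i)$ and $\s(X_g'(i)) = -8(g-i)$ from part~(a) of the Theorems above, I can pin down the topology of $Y$ and the homology class and genus of $B$. These numerical data should match the classical realization of $E(n)$, for $n = g-i$, as a 2-fold cover of a Hirzebruch surface branched along a smooth divisor composed of suitable sections and fibers.

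The main obstacle will be showing that the branch locus produced by the unchaining monodromy is ambiently isotopic in $Y$ to the classical branch locus that produces $E(g-i)$, so that the resulting branched covers are genuinely diffeomorphic rather than merely homeomorphic. A natural route is induction on $i$, anchored by part~(c) of the Theorems above, which identifies $X_g'(g-1)$ with $E(1)$. Each decrement of $i$ by $1$ should correspond to a prescribed local modification of the branch locus --- for instance, a band sum with an additional pair of fibers or insertion of a standard configuration of arcs --- whose effect on the double cover is precisely the passage from $E(n)$ to $E(n+1)$, consistent with the change in Euler characteristic by $12$, in signature by $-8$, and in spin type recorded by part~(b). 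Verifying that the unchaining move on the pencil side corresponds exactly to this branch-locus modification on the base side is the technical heart of the proof.
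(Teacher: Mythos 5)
Your first stage---using the hyperelliptic symmetry of the vanishing cycles to present the manifold as a $2$-fold branched cover of a rational surface---is the same as the paper's, though the paper works with the blown-up fibration $X_g(i)$ as a cover of $\F_1 \# (i+1)\cpb$ and then must explicitly locate, via a sequence of handle slides in the base, the $2(i+1)$ spheres whose lifts are the $(-1)$-sections to be blown down before one has $X_g'(i)$ itself as a cover of $\F_{i+1}$; your shortcut of descending the involution directly to the pencil would need the analogous care with the $2(i+1)$ boundary components. The genuine gap, however, is the step you yourself flag as ``the technical heart'': the Euler characteristic and signature computations only pin down the homology class of the branch locus, not its ambient isotopy class, and you do not supply the isotopy to the classical sections-plus-fibers divisor. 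Your proposed remedy---induction on $i$, with each unchaining corresponding to a local branch-locus modification realizing $E(n)\leadsto E(n+1)$---does not close this gap: the ambient rational surface changes with $i$ (the cover is branched in a different Hirzebruch surface at each stage), and identifying the smooth effect of a single unchaining surgery is precisely the question Baykur, Hayano, and Monden left open, so the inductive step is as hard as the theorem itself.

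What actually carries the paper's proof is a concrete isotopy of the branch surface, presented as a banded unlink diagram, with no induction on $i$ at all. The key is Lemma~\ref{recursive}: a fixed sequence of band slides, band dives, and $2$-handle band slides that trades four of the $2g+2$ horizontal disks of the ribbon branch surface for an increase in the linking with the $n$-framed handle and four trivial bands. Iterating this roughly $g/2$ times (with a separate parity adjustment for even $g$) collapses the branch surface onto the standard $4$-strand (respectively $6$-strand) configuration whose double cover is visibly the genus~$1$ Lefschetz fibration with monodromy $(t_{c_1}t_{c_2}t_{c_3})^{4(g-i)}$, i.e.\ $E(g-i)$. If you want to complete your outline, you would need to produce an explicit isotopy of this kind; the numerical and structural observations in your proposal, while all correct, do not substitute for it.
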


This has some immediate Corollaries. In \cite{Unchaining}, \BHM note that when $g-i$ is even, $X_g'(i)$ is irreducible (since it is spin), but the irreducibility of $X_g'(i)$ for odd $g-i$ is left open. We now have

\begin{Cor}
$X_g'(i)$ is irreducible for all $g \geq 3, \ 0 \leq i \leq g-2.$
\end{Cor}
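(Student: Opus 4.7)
The plan is to read the corollary off directly from Theorem~\ref{main}. For $0 \leq i \leq g-2$ the integer $n := g-i$ satisfies $n \geq 2$, and Theorem~\ref{main} supplies a diffeomorphism $X_g'(i) \cong E(n)$ with $n \geq 2$. The entire content of the corollary therefore reduces to the well-known statement that the elliptic surface $E(n)$ is irreducible whenever $n \geq 2$.

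This reduction is a classical application of Seiberg-Witten theory: $E(n)$ is a minimal simply connected K\"ahler surface with $b^+(E(n)) = 2n-1 \geq 3$, so by Taubes's theorem it has non-vanishing Seiberg-Witten invariants, while the connected sum vanishing theorem forces these to be zero for any smooth splitting $E(n) \cong M_1 \# M_2$ with $b^+(M_j) > 0$ on both sides. Simple connectivity of $E(n)$ rules out a homotopy sphere summand, and irreducibility follows; I would just cite this result from \cite{GompfStipsicz}.

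There is no real obstacle here: once Theorem~\ref{main} is in hand the corollary is essentially a citation, and the work done in \cite{Unchaining} for the spin case (those $i$ with $g-i$ even) goes through verbatim for all $0 \leq i \leq g-2$. One should only observe that the excluded value $i = g-1$ corresponds to $n = 1$, where $E(1) \cong \cp \# 9\cpb$ is manifestly reducible, so the range $0 \leq i \leq g-2$ is sharp.
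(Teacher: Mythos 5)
Your proposal is correct and matches the paper's (implicit) argument: the corollary is stated as an immediate consequence of Theorem~\ref{main}, relying on the standard fact that $E(n)$ is irreducible for $n = g-i \geq 2$. The paper gives no further detail, so your Seiberg--Witten justification simply fills in the standard citation-level reasoning.
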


Additionally, in \cite{Unchaining}, the Kodaira dimensions of $X_g'(i)$ are computed only for the special cases of $g-3 \leq i \leq g-1$ and $g-i$ even. Our main theorem fills in the missing cases.

\begin{Cor} \label{Cor_Kodaira}
The symplectic Kodaira dimension of $X_g'(i)$ is
\[
	\kappa(X'_g(i)) = \begin{cases}
	-\infty &  \ \textit{if}  \ \  i = g - 1 \\
	\ \ 0 & \  \textit{if} \ \ i= g-2  \\
	\ \ 1 &  \ \textit{if} \ \ 0\leq i \leq g-3.
	\end{cases}
	\] 
\end{Cor}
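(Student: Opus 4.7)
The plan is to deduce Corollary \ref{Cor_Kodaira} directly from Theorem \ref{main}, combined with the classical Kodaira dimensions of the elliptic surfaces $E(n)$ and the smooth invariance of symplectic Kodaira dimension in dimension four.

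First I would recall that $E(1) = \cp \# 9\cpb$ is a rational surface and hence has Kodaira dimension $-\infty$, that $E(2) \cong K3$ has Kodaira dimension $0$, and that for every $n \geq 3$ the elliptic surface $E(n)$ is properly elliptic with Kodaira dimension $1$. Because each $E(n)$ carries a K\"ahler structure, these classical holomorphic values coincide with the symplectic Kodaira dimension $\kappa$.

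Next I would invoke the theorem of T.-J. Li that, among symplectic 4-manifolds, $\kappa$ is an invariant of the underlying smooth structure. By Theorem \ref{main}, $X_g'(i)$ is diffeomorphic to $E(g-i)$, so $\kappa(X_g'(i)) = \kappa(E(g-i))$. The three cases in the corollary correspond exactly to $g-i = 1$ (yielding $-\infty$), $g-i = 2$ (yielding $0$), and $g-i \geq 3$, i.e. $0 \leq i \leq g-3$ (yielding $1$).

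The only nontrivial ingredient is Theorem \ref{main} itself; once the diffeomorphism classification is in hand, the corollary is an immediate consequence and there is no genuine obstacle to overcome beyond what has already been surmounted in proving the main theorem. I would simply need to be careful to cite the smooth invariance of $\kappa$ and the identification of $\kappa(E(n))$ with the complex Kodaira dimension in the K\"ahler setting.
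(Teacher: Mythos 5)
Your proposal is correct and matches the paper's (implicit) argument exactly: the corollary is deduced from Theorem~\ref{main} together with the known Kodaira dimensions of the elliptic surfaces $E(n)$ and the smooth invariance of the symplectic Kodaira dimension. No further comment is needed.
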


An additional corollary concerns {\em symplectic Calabi-Yau} 4-manifolds. A complex Calabi-Yau surface is one with a trivial canonical class, and one can likewise define a symplectic Calabi-Yau 4-manifold to be one with a trivial symplectic canonical class. All known examples of symplectic Calabi-Yau manifolds are complex K3 surfaces or torus bundles over tori. Since any symplectic Calabi-Yau must have the rational homology type of these complex surfaces (\cite{Bauer}, \cite{Li_IMRN}), it is an intriguing open question if there exist {\em any} symplectic Calabi-Yau 4-manifolds which are not diffeomorphic to one of these (\cite{FriedlVidussi}, \cite{Li_JDG}). Baykur, Hayano, and Monden show that the manifolds $X_g'(g-2)$ are symplectic Calabi-Yau (\cite{Unchaining}, Corollary 4.10), and ask if they are diffeomorphic to the standard K3 surface. We have
\begin{Cor} \label{Cor_SCY}
The symplectic Calabi-Yau manifolds $X_g'(g-2)$ are diffeomorphic to K3.
\end{Cor}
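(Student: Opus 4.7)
The plan is to deduce this corollary as an essentially immediate consequence of Theorem \ref{main}. Setting $i = g-2$ in the statement of the main theorem yields
\[
	X_g'(g-2) \ \cong \ E\bigl(g - (g-2)\bigr) \ = \ E(2),
\]
and $E(2)$ is, by standard convention, the K3 surface. This gives the desired diffeomorphism.

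In this sense there is no real obstacle in the corollary beyond what is already carried by the proof of the main theorem; all the work is in establishing the identification $X_g'(i) \cong E(g-i)$ via the branched-cover description. It is worth observing, as a consistency check, that the conclusion aligns with the independently known fact from \cite{Unchaining} that $X_g'(g-2)$ is symplectic Calabi-Yau: among the simply connected elliptic surfaces $E(n)$, the one with trivial canonical class, hence the unique Calabi-Yau example in the family, is precisely $E(2)$. This matches Corollary \ref{Cor_Kodaira}, where the Kodaira dimension $\kappa = 0$ is attained exactly at $i = g-2$. The corollary thus settles in the affirmative the question of \BHM as to whether their symplectic Calabi-Yau manifolds are diffeomorphic to the standard K3.
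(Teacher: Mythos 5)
Your proposal is correct and matches the paper's (implicit) argument exactly: the corollary is stated as an immediate consequence of Theorem~\ref{main}, obtained by setting $i=g-2$ so that $X_g'(g-2)\cong E(2)$, which is the K3 surface. Your consistency checks against the Calabi-Yau property and Corollary~\ref{Cor_Kodaira} are accurate but not needed for the proof.
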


In addition to its relevance to finding examples of symplectic Calabi-Yau manifolds, this result serves to illustrate the diversity of Lefschetz pencils on fixed 4-manifolds. The K3 surface is known to admit pencils of every genus \cite{Smith}, and it is noted in \cite{Unchaining} that the diffeomorphism $X_g'(g-1) \cong E(1)$ implies that the same is true for the rational elliptic surface $E(1)$. The author is not aware of any other such examples. We now have

\begin{Cor} \label{all_g}
For all $n \geq 3$, the elliptic surface $E(n)$ admits a genus~$g$ Lefschetz pencil for every $g \geq n$.
\end{Cor}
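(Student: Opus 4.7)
The plan is to derive Corollary \ref{all_g} as an immediate consequence of Theorem \ref{main} combined with the construction of $X_g'(i)$ in \cite{Unchaining}. By construction, each $X_g'(i)$ already comes equipped with an explicit genus $g$ Lefschetz pencil, so once $E(n)$ is identified with some $X_g'(i)$ the existence of the pencil is automatic.

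Given $n \geq 3$ and $g \geq n$, I would set $i = g - n$. First I would verify the hypotheses required in Theorem \ref{UnchainingSummary}: the condition $g \geq 3$ holds since $g \geq n \geq 3$, and the condition $0 \leq i \leq g-1$ holds since $g \geq n$ gives $i \geq 0$ and $n \geq 1$ gives $i \leq g-1$. Thus $X_g'(g-n)$ is defined and carries a genus $g$ Lefschetz pencil by the construction of \BHM.

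Next I would invoke Theorem \ref{main}, which gives a diffeomorphism $X_g'(g-n) \cong E\bigl(g - (g-n)\bigr) = E(n)$. Pulling back the pencil on $X_g'(g-n)$ through this diffeomorphism produces a genus $g$ Lefschetz pencil on $E(n)$, establishing the corollary.

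There is essentially no obstacle at this stage: all substantive work lies in Theorem \ref{main}, and the corollary amounts to a re-indexing of the family $\{X_g'(i)\}$ by the value of $g-i$. The only thing to keep in mind is the range of parameters, which as noted above is automatically satisfied under the hypotheses $n \geq 3$ and $g \geq n$.
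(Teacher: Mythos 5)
Your proposal is correct and matches the paper's (implicit) argument exactly: the corollary is an immediate consequence of Theorem~\ref{main} together with the fact that each $X_g'(i)$ carries a genus~$g$ Lefschetz pencil by construction, and setting $i = g-n$ gives $X_g'(g-n) \cong E(n)$ with the parameter range $0 \leq i \leq g-3$ automatically satisfied. Nothing further is needed.
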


A deeper related application concerns finding inequivalent Lefschetz pencils on a given 4-manifold with the same topological data (i.e. genus and number of base points). By using the braiding lantern substitution technique of \cite{Multisections}, Baykur, Hayano, and Monden prove
\begin{thm}[\cite{Unchaining}, Corollary 6.4] For all $g \geq 3$ and $0 \leq i \leq g-1$, the manifold $X_g'(i) \# \cpb$ admits a pair of inequivalent genus~g Lefschetz pencils. In particular, the manifold $E(1)  \# \cpb$ admits a pair of inequivalent genus~$g$ Lefschetz pencils for all $g \geq 3$.
\end{thm}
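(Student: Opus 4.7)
The plan is to exhibit two explicit positive monodromy factorizations of genus $g$ Lefschetz pencils on $X_g'(i) \# \cpb$ and to distinguish them by a combinatorial invariant preserved under Hurwitz equivalence. For the first factorization $W_1$, I would take the genus $g$ Lefschetz pencil on $X_g'(i)$ described in \cite{Unchaining} and blow up one of its base points, producing a genus $g$ Lefschetz pencil on $X_g'(i) \# \cpb$ whose factorization in the mapping class group of the fiber with marked points is inherited directly from the chain-relation factorization of \BHM.

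For the second factorization $W_2$, I would apply the braiding lantern substitution of \cite{Multisections} to a suitable subword of $W_1$. The classical lantern relation writes a product of four boundary-parallel Dehn twists on a four-holed sphere as a product of three interior Dehn twists; the braiding lantern version upgrades this identity to the mapping class group of the fiber with marked points, producing at the same time a nontrivial half-twist of two base points. Applied to $W_1$, this yields a new positive factorization representing the same global monodromy, and hence a genus $g$ Lefschetz pencil on the same total space $X_g'(i) \# \cpb$. The first technical step is locating four vanishing cycles of $W_1$ that cobound a four-holed sphere in the fiber so that the substitution is available; the abundance of chain relations used in the \BHM factorizations makes such a lantern configuration very plausible, and pinning one down is mostly bookkeeping against their list of vanishing cycles.

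The main obstacle is proving that the two resulting pencils are genuinely \emph{inequivalent}. The crucial invariant is the braid that the global monodromy induces on the set of base points: Hurwitz moves and global conjugation on a positive factorization preserve this induced base-point braid up to overall conjugation, whereas the braiding lantern substitution is engineered precisely to modify it by a half-twist not achievable through Hurwitz moves. Hence $W_1$ and $W_2$ cannot be Hurwitz equivalent, which translates into inequivalence of the corresponding pencils on $X_g'(i) \# \cpb$. The ``in particular'' statement for $E(1) \# \cpb$ is then immediate from Theorem~\ref{UnchainingSummary}(c).
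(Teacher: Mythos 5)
This statement is not actually proved in the paper: it is quoted verbatim from \cite{Unchaining}, Corollary 6.4, and serves only as input for Corollary~\ref{all_g_inequiv}, which upgrades it using Theorem~\ref{main}. So there is no in-paper proof to compare against, only the paper's one-line attribution: the result is obtained ``by using the braiding lantern substitution technique of \cite{Multisections}.'' Your identification of that substitution as the engine, and the overall shape of the construction (one pencil obtained by blowing up a base point of the pencil on $X_g'(i)$, a second obtained by a lantern-type substitution on the monodromy), is consistent with that attribution.

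The genuine gap is in your inequivalence argument. The invariant you propose --- the braid that the global monodromy induces on the set of base points --- is vacuous: the monodromy factorization of any genus $g$ pencil with $n$ base points is a positive factorization of the boundary multitwist $t_{\delta_1}\cdots t_{\delta_n}$ in $\Gamma_g^n$, and this total product fixes each boundary component, so the induced base-point braid is the identity for \emph{every} pencil. If the substituted word genuinely left a residual half-twist of two base points in the total product, it would not be the monodromy factorization of a Lefschetz pencil at all; the content of the braiding lantern relation is precisely that the half-twist is traded against boundary data so that one lands on an honest positive factorization again, with the number of boundary components adjusted, and checking that both pencils end up with the same number of base points is part of the required bookkeeping rather than a given. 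To prove inequivalence you need an invariant of the pencil itself that survives Hurwitz moves, global conjugation, change of fiber identification, and self-diffeomorphisms of the total space. Note that the number of critical points is useless here, since $e(W)=4-4g-b+\mu$ forces it to agree once the total space, genus, and base point count agree; the invariant that is actually available, and the one Baykur, Hayano, and Monden use, is the structure of the singular fibers --- the lantern substitution introduces a reducible fiber (a separating vanishing cycle) that the blown-up pencil does not possess. Finally, the configuration needed for the braiding lantern is not ``four vanishing cycles cobounding a four-holed sphere'' in the interior of the fiber: it must involve the boundary-parallel curves $\delta_j$ themselves (i.e., the base points), exactly as in the lanterns used to lift the relation from $\Gamma_g^2$ to $\Gamma_g^{2(i+1)}$ in the construction of the pencils in the first place.
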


Theorem~\ref{main} strengthens this result.

\begin{Cor} \label{all_g_inequiv}
For all $n \geq 3$, the once blown up elliptic surface $E(n) \# \cpb$ admits a pair of inequivalent genus~$g$ Lefschetz pencils for all $g \geq n$.
\end{Cor}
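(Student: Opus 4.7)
The plan is to obtain Corollary~\ref{all_g_inequiv} as an immediate consequence of Theorem~\ref{main} combined with the earlier inequivalence result \cite{Unchaining}, Corollary~6.4 quoted above. Given $n \geq 3$ and $g \geq n$, I set $i := g - n$. Since $n \geq 3$, one has $0 \leq i \leq g - 3$, and in particular $0 \leq i \leq g-1$, so the hypotheses of the \BHM theorem are satisfied for the parameter pair $(g, g-n)$.

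The first step is to apply Theorem~\ref{main} to identify $X_g'(g-n)$ with $E(n)$. Taking connected sum with $\cpb$ preserves this diffeomorphism, so
\[
X_g'(g-n) \# \cpb \ \cong \ E(n) \# \cpb.
\]
The second step is to invoke \cite{Unchaining}, Corollary~6.4 to obtain a pair of inequivalent genus~$g$ Lefschetz pencils on $X_g'(g-n) \# \cpb$, and then transport these along the above diffeomorphism to produce a pair of genus~$g$ Lefschetz pencils on $E(n) \# \cpb$.

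The only point that warrants brief discussion is that the inequivalence of the two pencils must be preserved by the diffeomorphism. This is built into the definitions used in \cite{Unchaining, Multisections}: equivalence of Lefschetz pencils is phrased via Hurwitz equivalence plus global conjugation of monodromy factorizations, and is therefore intrinsic to the pair consisting of the total space together with its pencil structure, hence invariant under orientation-preserving diffeomorphisms of the total space. Since Theorem~\ref{main} supplies precisely such a diffeomorphism, the transported pencils remain inequivalent. There is thus no genuine obstacle here; all of the real content has already been packaged into Theorem~\ref{main}, and Corollary~\ref{all_g_inequiv} is simply the transport of \cite{Unchaining}, Corollary~6.4 across the diffeomorphism $X_g'(g-n) \cong E(n)$.
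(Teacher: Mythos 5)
Your proposal is correct and is exactly the argument the paper intends: the corollary is stated with no separate proof, being the immediate combination of Theorem~\ref{main} (with $i = g-n$) and \cite{Unchaining}, Corollary~6.4, transported across the diffeomorphism $X_g'(g-n)\#\cpb \cong E(n)\#\cpb$. Your parameter check and the remark that equivalence of pencils is diffeomorphism-invariant are both accurate.
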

\noindent Of course, the conclusions of Corollaries \ref{all_g} and \ref{all_g_inequiv} apply to blow ups of $E(n)$ and $E(n) \# \cpb$ at base points, as well. 

The method of proof of Theorem~\ref{main} exploits the natural 2-fold symmetry of Baykur, Hayano, and Monden's construction. We begin by blowing up the pencil on $X_g'(i)$ to obtain an associated Lefschetz fibration $X_g(i)$, and use this symmetry to represent $X_g(i)$ as a 2-fold branched cover of a rational surface. A sequence of handle slides in the base of this cover allows one to find and blow down the required number of exceptional sections, and we arrive at a branched cover description of $X_g'(i)$. The branch surface of this cover is represented as a banded unlink diagram, of the sort studied in \cite{BandedUnlink}, with an explicitly drawn ribbon surface as (most of) the branch locus. We then use various band moves to obtain an isotopy of the branch surface, yielding a branched cover description that is recognized as one for elliptic surfaces.

In Section~\ref{sec:BandedUnlink} we discuss banded unlink diagram descriptions of embedded surfaces, and review the moves on these diagrams that we will employ in the proof. The following section reviews the topology of Lefschetz pencils and fibrations.  Finally, in Section~\ref{sec:proof}, we define the manifolds $X_g'(i)$ and $X_g(i)$, and give the proof of Theorem~\ref{main}.

\section{Banded Unlink Diagrams} \label{sec:BandedUnlink}

In this section we review the notion of a banded unlink diagram \cite{BandedUnlink}.  This describes a closed surface embedded in a closed 4-manifold $X$. Banded unlink diagrams can be defined using any handlebody description for $X$, but since in our application $X$ will lack 1- and 3-handles, we only discuss that setting here. 

Suppose $X$ is obtained by attaching $n$ 2-handles to a single 0-handle, and then attaching one 4-handle. The manifold $X$ can be depicted by a Kirby diagram $\cK$ consisting of an $n$-component framed link in $S^3$. Let $X_0$ denote the boundary of the 0-handle, and $X_1$ the union of the 0- and 2-handles. Of course, both $\partial(X_0)$ and $\partial(X_1)$ are $S^3$, and $\partial(X_1)$ can be described as the result of a surgery of $S^3$ along the components of $\cK$.

Let $L$ be a link in the exterior $E(\cK)$. Since $L$ avoids the attaching region of the 2-handles, we can view $L$ as a link in $\partial(X_0)$  and in $\partial(X_1)$. In a banded unlink diagram, we begin with an unlink in $E(\cK)$, and form a ribbon surface by attaching a disjoint collection of bands to the spanning disks of the unlink; $L$ is the link that results from the band surgery to the unlink, and we may push the interior of the ribbon surface into $X_0$ to get an embedded surface. In a banded unlink diagram, we also require that $L$ bounds a collection of disjoint disks in $\partial(X_1)$. In this way, the ribbon surface that $L$ bounds can be capped off by these disks, giving a closed surface in $X$.

In \cite{BandedUnlink}, Mark Hughes, Seungwon Kim, and Maggie Miller give a complete set of moves for banded unlink diagrams of isotopic surfaces in a 4-manifold. As we will apply these to manifolds without 1-handles, we review only the moves that we use later: band slides, band swims, 2-handle band swims, and 2-handle band slides. These are shown in Figure~\ref{buds}.
\begin{figure}[ht]
\begin{center}
\includegraphics[width=6in]{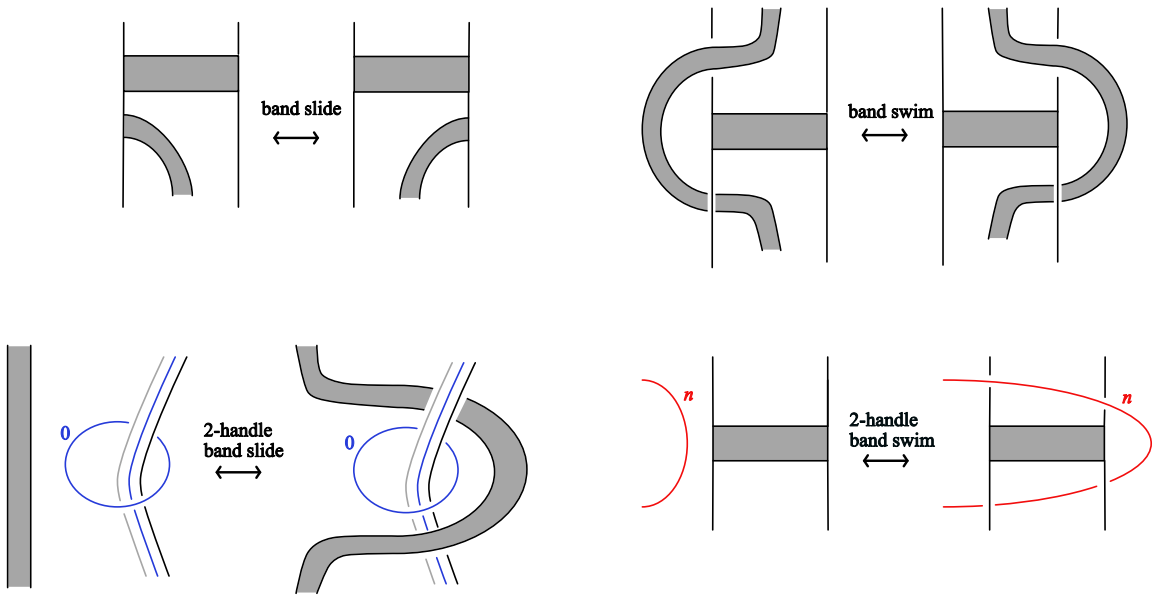}
\caption{In the two swim moves, the band/attaching circle passes lengthwise through the interior of the horizontally drawn band.}
\label{buds}
\end{center}
\end{figure}
\noindent (The 2-handle band slides in Figure~\ref{buds} can be done with any knotted attaching circle and any framing, following the usual rules of Kirby Calculus; the 0-framed unknot pictured here is all that will be used later. The strands running through the attaching circle of the handle can represent other handles, bands, or unlink components.)

Two particular iterations of these moves will be used often, and are shown below. For reference, we refer to these as {\em band dives} and {\em 2-handle band dives}.
\begin{figure}[ht]
\begin{center}
\includegraphics[width=5in]{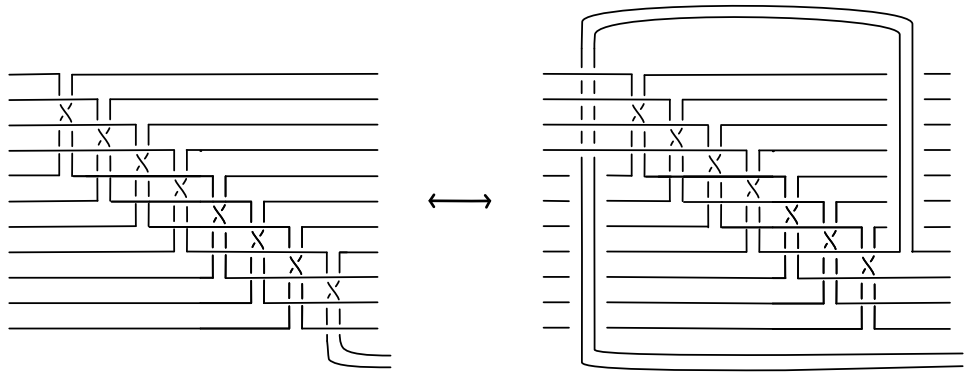}
\caption{A band dive.}
\label{band dive}
\end{center}
\end{figure}
\begin{figure}[ht]
\begin{center}
\includegraphics[width=6in]{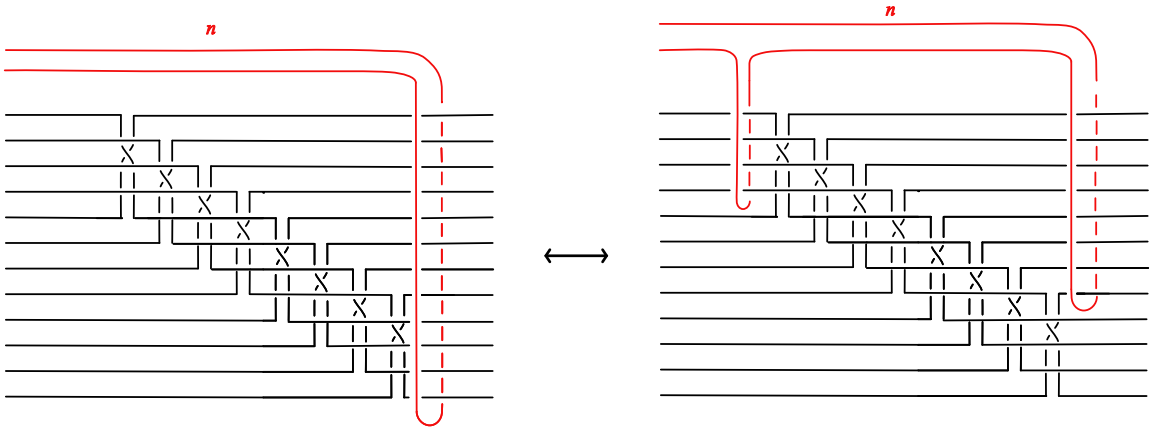}
\caption{A 2-handle band dive.}
\label{2h band dive}
\end{center}
\end{figure}
These will later be used left-to-right, as drawn in Figures~\ref{band dive} and \ref{2h band dive}, although their name -- and easiest visualization, perhaps --  comes from regarding them right-to-left, as a sequence of swims descending through the half twisted bands.

\section{Lefschetz Pencils and Fibrations} \label{sec:Lefschetz}

In this section we review the definitions of Lefschetz fibrations and Lefschetz pencils, and discuss the topology of these structures. A more comprehensive description of the topology of Lefschetz fibrations and pencils can be found in \cite{GompfStipsicz}.

We denote a closed oriented genus $g$ surface as $\Sigma_g$, and a compact oriented genus $g$ surface with $n$ boundary components as $\Sigma_g^n$. Their mapping class groups will be denoted as $\Gamma_g$ and $\Gamma_g^n$, respectively. We will also denote a sphere with $m$ marked points by $\Sigma_{0,m}$, and its mapping class group by $\Gamma_{0,m}$. 
\begin{Def}
Let $W$ be a compact oriented smooth 4-manifold, and $C$ a compact oriented smooth surface. A proper smooth map $f:W \to C$ is a {\em Lefschetz fibration} if 
\begin{itemize}
\item[(1)] the critical points of $f$ lie in the interior of $W$; and
\item[(2)] for each critical point of $f$ in $W$, there are complex coordinate charts agreeing with the orientations on $W$ and $C$ such that locally $f$ can be expressed as $f(z_1,z_2)=z_1^2+z_2^2$.
\end{itemize}
\end{Def}

In this paper, we will only encounter $C=S^2$ or $D^2$. 

\begin{Def} Let $W'$ be a closed oriented smooth 4-manifold
Let $B \subset W'$ be a finite set of points. A smooth map $f:W' \setminus B \to \cpo$ is a {\em Lefschetz pencil} if
\begin{itemize}
\item[(1)] for each critical point of $f$ in $W' \setminus B$, there are complex coordinate charts agreeing with the orientations on $W'$ and $\cpo$ such that locally $f$ can be expresses as $f(z_1,z_2)=z_1^2+z_2^2$; and
\item[(2)] for each point of $B$, there is a complex coordinate chart on $W'$ and an identification of the base as $\cpo$ such that locally $f$ can be expressed as $f(z_1,z_2)=[z_1:z_2]$.
\end{itemize}
The existence of a Lefschetz pencil $f:W' \setminus B \to \cpo$ will be described by saying that there is a Lefschetz pencil {\em on $W'$}.
\end{Def} 

The points of $B$ are called {\em base points} of the Lefschetz pencil. A Lefschetz pencil with $B=\emptyset$ is a Lefschetz fibration over $\cpo\cong S^2$. If $B\neq \emptyset$, we can blow up $W'$ at each basepoint to get $W$, and the Lefschetz pencil on $W'$ becomes a Lefschetz fibration $W \to \cpo\cong S^2$. 

It is a consequence of these definitions that for a Lefschetz fibration, a regular fiber $f^{-1}(x)$ is a closed genus $g$ surface. For a Lefschetz pencil with $n>0$ base points, $f^{-1}(x)$ is not compact, and we instead consider  $f^{-1}(x)\cap(W'\setminus(U_1 \cup \cdots \cup U_n))$, where $U_i$ is an open ball about the base point in each coordinate chart with property (2) above. This fiber will be a compact genus $g$ surface with $n$ boundary components. We refer to {\em genus $g$} Lefschetz fibrations or pencils, accordingly.

Lefschetz pencils and fibrations are understood topologically through monodromy factorizations. Let $x_1, \ldots, x_\mu$ be the critical values for $f$. We assume, without loss of generality, that each critical point of $f$ lies in a separate fiber. For a pencil, we select a regular value $x_0 \in \cpo$, and a disjoint collection of arcs $\gamma_i$ from $x_0$ to $x_i$, for each $i=1,\ldots, \mu$. (We also assume each $\gamma_i$ avoids the other critical points.) We further assume the arcs $\gamma_1, \ldots, \gamma_\mu$ appear in this order as we travel in a small circle about $x_0$. For each $i$, we consider a loop that begins at $x_0$, travels along $\gamma_i$, then counterclockwise around a small circle centered at $x_i$, and back to $x_0$ along $\gamma_i$. Using an identification of $f^{-1}(x_0)$ with $\Sigma_g^n$, the monodromy of $f$ along this loop is known to be a right-handed Dehn twist $t_{c_i}$ along a simple closed curve $c_i \subset \Sigma_g^n$ (\cite{Kas}). The curve $c_i$ is called a {\em vanishing cycle}. To get a global description of a Lefschetz pencil, these local models must fit together according to the equation $t_{c_1} \ldots t_{c_\mu} = t_{\delta_1} \ldots t_{\delta_n}$ in $\Gamma_g^n$, where $\delta_j$ denotes a right-handed  Dehn twist about a curve parallel to the $j$th boundary component of $\Sigma_g^n$. Conversely, given any factorization in $\Gamma_g^n$ of $t_{\delta_1} \ldots t_{\delta_n}$ as a product of right-handed Dehn twists, one can construct a Lefschetz pencil with monodromy prescribed by the factorization. 

When working with Lefschetz fibrations, one has a similar description of the local monodromy about a critical value $x_i$ as a right-handed Dehn twist $t_{c_i}$ about a simple closed curve $c_i \subset \Sigma_g$. To form a global Lefschetz fibration over $S^2$, the local monodromies must concatenate to form a relation $t_{c_1} \ldots t_{c_\mu} =1$ in $\Gamma_g$. 

Any particular monodromy description of a Lefschetz pencil is far from unique, as it depends on a choice of identification of a regular fiber, as well as on a system of arcs $\gamma_i$. Modifying these choices translates into a simple set of moves on factorizations in $\Gamma_g^n$ (see \cite{GompfStipsicz}), and two factorizations related in this way are said to be {\em Hurwitz equivalent}. 

There is a straightforward relationship between a monodromy factorization of a Lefschetz pencil on $W'$ and that of the Lefschetz fibration $W \to S^2$ obtained by blowing up $W'$ at all base points. Under the homomorphism $\Gamma_g^n \to \Gamma_g$ obtained by capping off each boundary component of $\Sigma_g^n$ with a disk, Dehn twists about the boundary parallel curves $\delta_j$ become trivial in $\Gamma_g$. A monodromy factorization $t_{c_1} \ldots t_{c_\mu} = t_{\delta_1} \ldots t_{\delta_n}$ in $\Gamma_g^n$ for the pencil on $W'$ then gives a monodromy factorization $t_{c_1} \ldots t_{c_\mu} =1$ in $\Gamma_g$ for the fibration $W \to S^2$. 

A monodromy factorization of a genus $g$ Lefschetz fibration $f:W \to S^2$ also leads to a handlebody description of $W$, in a well-understood way (\cite{GompfStipsicz}). One begins with a handlebody description of $\Sigma_g \times D^2$ consisting of a 0-handle, 1-handles, and 2-handles. Given a factorization $t_{c_1} \ldots t_{c_\mu} =1$ in $\Gamma_g$, we form $\Sigma_g \times D^2 \cup (\cup_{i=1}^\mu H_i^2),$ where each $H_i^2$ is a 2-handle attached along the vanishing cycle $c_i$ in a separate fiber $\Sigma_g \times \{ {\mathrm{point}} \} \subset \Sigma_g \times S^1 = \Sigma_g \times \partial D^2$. The 2-handles are attached along the $S^1$ factor in the order they appear in the factorization, and they have framing $-1$ relative to the framing on $c_i$ induced by the product $\Sigma_g \times S^1$. Following these handle attachments, we have a handlebody describing a Lefschetz fibration over $D^2$ with the prescribed monodromy factorization. The boundary of $\Sigma_g \times D^2 \cup (\cup_{i=1}^\mu H_i^2)$ is $\Sigma_g$-bundle over $S^1$ with monodromy $t_{c_1} \ldots t_{c_\mu}$; because this is isotopic to the identity, this boundary is diffeomorphic to $\Sigma_g \times S^1$. Hence we can extend the Lefschetz fibration to be one over $S^2$ by attaching the trivial fibration $\Sigma_g \times D^2 \to D^2$ along $\Sigma_g \times S^1$. This final attachment adds one or more 2-handles, 3-handles, and a 4-handle. 

A technique for constructing new Lefschetz pencils or fibrations from old is {\em monodromy substitution}. Given a monodromy factorization, a monodromy substitution swaps a subword of the factorization with a different (but equal, in $\Gamma_g^n$ or $\Gamma_g$) product of right-handed Dehn twists. In \cite{Unchaining}, Baykur, Hayano, and Monden employ this operation using the {\em odd chain relation}: suppose $c_1, c_2, \ldots, c_{2h+1}$ are a collection of simple closed curves on $\Sigma_g^n$ or $\Sigma_g$ that form a chain; that is, $c_i$ and $c_{i+1}$ intersect in one point for all $i$, and $c_i$ and $c_j$ are disjoint otherwise.  A regular neighborhood of $c_1 \cup \cdots \cup c_{2h+1}$ is a subsurface $S$ homeomorphic to $\Sigma_h^2$. The chain relation is $(t_{c_1} t_{c_2} \cdots t_{c_{2h+1}})^{2h+2} = t_{b_1} t_{b_2}$, where $b_1$ and $b_2$ are the boundary components of $S$. Using this relation to replace a  subword in a monodromy factorization given by the left hand side of the chain relation with the two Dehn twists on the right is referred to as {\em unchaining}.

{\bf Realizing Hyperelliptic Lefschetz fibrations as branched covers.}
Let $\iota: \Sigma_g \to \Sigma_g$ be the hyperelliptic involution, and $\pi: \Sigma_g \to \Sigma_{0,2g+2}$ the branched covering that is the quotient of $\iota$. A Lefschetz fibration on $W\to S^2$ is {\em hyperelliptic} if it is Hurwitz equivalent to one with a mondromy factorization where each vanishing cycle $c_i$ satisfies $\iota(c_i)=c_i$. If all $c_i$ are nonseparating, then $W$ is a 2-fold branched cover of an $S^2$-bundle over $S^2$, with the Lefschetz fibration map obtained as the composition of this cover with the bundle projection (\cite{Hyperelliptic}). This cover is crucial to the proof of Theorem~\ref{main}, and we review the details.

Since all $c_i$ are non-separating and symmetric, the factorization $t_{c_1} \ldots t_{c_\mu} = 1$ is the lift of the relation $h_{\pi(c_1)} \ldots h_{\pi(c_\mu)} =1$ in $\Gamma_{0,2g+2}$, where $h_{\pi(c_i)}$ is a right-handed disk twist about the arc $\pi(c_i)$ in $\Sigma_{0,2g+2}$. The factorization $h_{\pi(c_1)} \ldots h_{\pi(c_\mu)}$ can be used to construct a ribbon surface in $S^2 \times D^2$, for which the cover branched over that surface is a Lefschetz fibration over $D^2$ with the required monodromy factorization. The Birman-Hilden Theorem (see \cite{BirmanHilden}, \cite{FarbMargalit}) then implies that we can always extend this cover by attaching a trivial covering of $\Sigma_g \times D^2$ over $S^2 \times D^2$, resulting in $W$ covering an $S^2$-bundle over $S^2$ branched over a closed surface. 

In practice, the base and branch set of this cover can be explicitly drawn as a banded unlink diagram. In $S^2 \times D^2$, represented as a Kirby diagram by a 0-framed unknot, we begin with $2g+2$ disks representing $\{{\mathit{point}}\} \times D^2$, drawn as meridians to the unknot, with their interiors pushed into the 0-handle. The branched cover of $S^2 \times D^2$ over these disks is $\Sigma_g \times D^2$, restricting to the hyperelliptic quotient in each fiber. A ribbon surface is then constructed by attaching left-handed half-twisted bands so that the core of each band is the arc $\pi(c_i)$ in $S^2 \times \{{\mathit{point}}\}$. By the method in \cite{AkbulutKirby}, in the 2-fold cover of $S^2 \times D^2$ branched over this ribbon surface, each added band lifts to a 2-handle attached along $c_i$, with relative framing $-1$.  Thus the lift of $S^2 \times D^2$ branched over the ribbon surface is the total space of a Lefschetz fibration over $D^2$, with monodromy factorization  $t_{c_1} \ldots t_{c_\mu}$. On the boundary, we have a $\Sigma_g$-bundle over $S^1$ covering an $S^2$-bundle over $S^1$, each with monodromy isotopic to the identity. To extend the branched covering over $W$, it is necessary to find a {\em fiber-isotopy} of the factorization $t_{c_1} \ldots t_{c_\mu}$ to the identity (i.e. an isotopy through homeomorphisms which are all fiber-preserving with respect to $\pi$): using a given fiber-isotopy to the identity, we can then identify the branched covering on the boundary as $\pi \times {\mathrm{id}}: \Sigma_g \times S^1 \to S^2 \times S^1$ and extend the covering as $\pi \times {\mathrm{id}}: \Sigma_g \times D^2 \to S^2 \times D^2$. The attachment of $S^2 \times D^2$ to the base matches the boundary of disks $\{{\mathit{point}}\} \times D^2$ to the boundary of the ribbon surface, and in this way we get a closed surface as branch set. The extension attaches a 2-handle union a 4-handle to the diagram of the base, with the 2-handle attached as a meridian to the 0-framed 2-handle. When working with examples, the braid factorization $h_{\pi(c_1)} \ldots h_{\pi(c_\mu)}$ plays a valuable role. The necessary fiber-isotopy to the identity can often be seen by simply observing that the braid factorization is isotopic to the identity by an isotopy that fixes the branch points at all times, in which case one obtains a fiber-isotopy of $t_{c_1} \ldots t_{c_\mu}$ to the identity as its lift. We can also use the braid factorization to compute the framing of the second attached 2-handle and to see how the attaching circle links the boundary of the branch surface. To do this, we select a reference point $\ast \in \Sigma_{0,2g+2} \setminus B^2_{2g+2}$, where $B^2_{2g+2}$ is a disk containing the branch points, and track a framed neighborhood of $\ast$ through the isotopy of $d_{h(c_1)} \ldots d_{h(c_\mu)}$ to the identity.

In \cite{Hyperelliptic}, it was shown how to modify this branched covering description of a hyperelliptic Lefschetz fibration to accomplish an unchaining monodromy substitution. Although the procedure in \cite{Hyperelliptic} was described only for even unchaining substitutions, the method applies equally well to the odd unchaining substitutions considered here.

\section{The proof of Theorem \ref{main}} \label{sec:proof}
We are now ready to describe the manifolds $X_g'(i)$ constructed by Baykur, Hayano, and Monden, and prove that they are diffeomorphic to the elliptic surfaces $E(g-i)$. 

\subsection{The Manifolds $X'_g(i)$ and $X_g(i)$.} In \cite{Unchaining}, Baykur, Hayano, and Monden construct their infinite family of Lefschetz pencils by explicit monodromy factorization. Their factorizations use Dehn twists about the curves on $\Sigma_g^{2(i+1)}$ shown below. We abbreviate the product of boundary curve twists as $\Delta =  t_{\delta_{i+1}} \cdots t_{\delta_2} t_{\delta_1} t_{\delta_{i+1}^\prime} \cdots t_{\delta_2^\prime} t_{\delta_1^\prime}$, and also let $D_g=t_{d_4} t_{d_5} \cdots t_{d_{2g+1}}$ and $E_g=t_{e_{2g+1}} \cdots t_{e_5} t_{e_4}.$
\begin{center}
\includegraphics{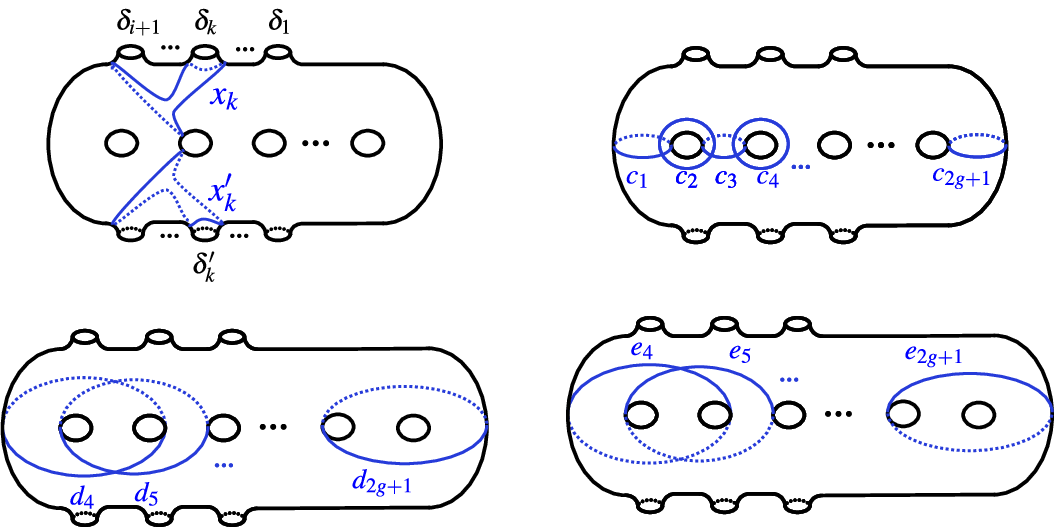}
\end{center}

\begin{thm}[\cite{Unchaining}, Theorem 4.6] \label{BHMpencils} 
For each $g \geq 3$ and $0\leq i \leq g-1$, there are symplectic genus $g$ Lefschetz pencils on $X'_g(i)$ with monodromy factorizations in $\Gamma_g^{2(i+1)}$ given by:

\begin{align}
\Delta=& \begin{cases}
D_g E_g t_{x_{i+1}} \cdots t_{x_2} t_{x_1} t_{x_{i+1}^\prime} \cdots t_{x_2^\prime} t_{x_1^\prime} (t_{c_1} t_{c_2} t_{c_3})^{4(g-i)} & (g \ \mathrm{odd}) \\
D_g E_g t_{x_{i+1}} \cdots t_{x_2} t_{x_1} t_{x_{i+1}^\prime} \cdots t_{x_2^\prime} t_{x_1^\prime} (t_{c_1} t_{c_2} t_{c_3})^{4(g-1-i)+2} (t_{c_3} t_{c_2} t_{c_1})^2 & (g \ \mathrm{even}). \label{BHMmonodromy_p}
\end{cases}
\end{align}
\end{thm}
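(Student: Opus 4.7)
The statement is an identity in $\Gamma_g^{2(i+1)}$: once established, the positive factorization it provides automatically produces a symplectic genus-$g$ Lefschetz pencil with $2(i+1)$ base points, whose total space is then \emph{defined} to be $X_g'(i)$. My plan is induction on $g-1-i$, with the odd chain relation $(t_{c_1} t_{c_2} t_{c_3})^4 = t_{b_1} t_{b_2}$ of Section~\ref{sec:Lefschetz} as the substitution rule.

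\emph{Base case $(i = g-1)$.} The factorization specializes, for odd $g$, to $\Delta = D_g E_g \cdot \prod_{j=1}^{g}(t_{x_j} t_{x_j'}) \cdot (t_{c_1} t_{c_2} t_{c_3})^{4}$, with the analogous statement for even $g$. I would verify this directly as a genus-$g$ pencil presentation of the rational elliptic surface $E(1)$: the blocks $D_g$ and $E_g$ are the Matsumoto-style twists arising from the hyperelliptic involution on $\Sigma_g$, the $(t_{c_1} t_{c_2} t_{c_3})^{4}$ factor records the genus-1 chain structure coming from the elliptic fibration on $E(1)$, and the $t_{x_j} t_{x_j'}$ account for the $g$ pairs of $(-1)$-sections visible after pushing the pencil up from the genus-1 level. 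The existence of genus-$g$ pencils on $E(1)$ for every $g$ is known independently \cite{Smith}; the content here is only that the specific curves labeled in the figure realize one.

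\emph{Inductive step $(i \to i-1)$.} Apply the odd chain relation in reverse to replace the subword $t_{\delta_{i+1}} t_{\delta_{i+1}'}$ by $(t_{c_1} t_{c_2} t_{c_3})^4$, where $c_1, c_2, c_3$ is the 3-chain of the figure whose regular neighborhood $\Sigma_1^2 \subset \Sigma_g^{2(i+1)}$ has boundary components isotopic to $\delta_{i+1}$ and $\delta_{i+1}'$. This operation removes the two boundary components $\delta_{i+1}, \delta_{i+1}'$ from $\Delta$ and, after reinterpreting the surface as $\Sigma_g^{2i}$, introduces the pair $t_{x_{i+1}} t_{x_{i+1}'}$ in the interior word, coming from the capped-off boundary curves now sitting in the interior. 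These are precisely the structural changes visible when comparing the $i$ and $i-1$ statements of the theorem, and they correctly increment the chain exponent by $4$ (in the odd case) or by $4$ together with a reorganization of the trailing $(t_{c_3} t_{c_2} t_{c_1})^2$ block (in the even case).

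\emph{Main obstacle.} The real work is combinatorial: one must read from the figure that the specific chain $c_1, c_2, c_3$ has a regular neighborhood whose boundary is isotopic to $(\delta_{i+1}, \delta_{i+1}')$, that this chain is disjoint from $D_g, E_g$ and from the $x_j, x_j'$ with $j \leq i$, and that the inductive step lands the two new twists on the precise curves $x_{i+1}, x_{i+1}'$ prescribed in the figure. Once these disjointness and identification checks are in hand, the odd chain relation — intrinsically a relation in $\Gamma_1^2$ — lifts unambiguously to $\Gamma_g^{2(i+1)}$, all substitutions commute with the remaining twists, and the induction closes. The slight awkwardness in even genus, where the trailing $(t_{c_3} t_{c_2} t_{c_1})^2$ is needed to keep the word positive, is handled by grouping this piece into the substitution template once and for all.
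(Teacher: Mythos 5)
There is a genuine gap in your inductive step, and it is worth noting at the outset that the paper does not prove this statement at all --- it is quoted from \cite{Unchaining} (their Theorem 4.6), and the text only reviews the derivation. That derivation runs in the opposite direction from yours: one starts at the ``top'' with the full chain relation $t_{\delta_1}t_{\delta_1'}=(t_{c_1}t_{c_2}\cdots t_{c_{2g+1}})^{2g+2}$ in $\Gamma_g^2$ (the pencil on the general type surface $Z_g'$), Hurwitz-reorganizes it into the form (\ref{BHMchain_Heq_p}), and then applies unchaining substitutions $i$ times to $(t_{c_1}t_{c_2}t_{c_3})^4$ and once to $(t_{c_5}\cdots t_{c_{2g+1}})^{2g-2}$; the passage from $\Gamma_g^2$ to $\Gamma_g^{2(i+1)}$ is a separate inductive lantern-relation argument. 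Your induction instead starts at the ``bottom'' ($i=g-1$, the $E(1)$ case), but that base case is not a direct verification --- it is essentially BHM's Theorem 4.8, itself obtained by running the full machinery --- so your proposal assumes the hardest instance of the theorem as its starting point.

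The concrete failure is in the step $i\to i-1$. You propose to replace $t_{\delta_{i+1}}t_{\delta_{i+1}'}$ by $(t_{c_1}t_{c_2}t_{c_3})^4$ on the grounds that a regular neighborhood of the $3$-chain has boundary isotopic to $\delta_{i+1}$ and $\delta_{i+1}'$. That identification cannot hold for $g\geq 3$: a regular neighborhood of the chain $c_1,c_2,c_3$ is an embedded copy of $\Sigma_1^2$ whose boundary consists of \emph{interior} simple closed curves of $\Sigma_g^{2(i+1)}$ (they would be boundary-parallel only if the complement of the neighborhood were a collar, forcing $g=1$). In the actual construction these two curves are $x_{i+1}$ and $x_{i+1}'$ --- this is exactly why each unchaining contributes a pair $t_{x_j}t_{x_j'}$ to the interior word --- whereas the boundary twists $t_{\delta_{i+1}}t_{\delta_{i+1}'}$ on the left-hand side arise from the entirely separate lantern-relation lifting that produces two additional $(-1)$-sections per unchaining. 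Your proposal collapses these two distinct mechanisms into one move, and the collapsed move does not typecheck; to repair it you would need to (a) perform the chain substitution producing $t_{x_{i+1}}t_{x_{i+1}'}$ and (b) independently justify, via the lantern relation, that the resulting relation lifts from $\Gamma_g^{2i}$ to $\Gamma_g^{2(i+1)}$.
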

\noindent (We have cyclically permuted the righthand side of (\ref{BHMmonodromy_p}) from its expression in \cite{Unchaining}.) 

If we cap off each boundary component of $\Sigma_g^{2(i+1)}$ with a disk, each of the curves $x_j$ (and $x_j'$) become parallel copies of a curve $x$ (and $x'$, respectively) on $\Sigma_g$. 
\begin{center}
\includegraphics{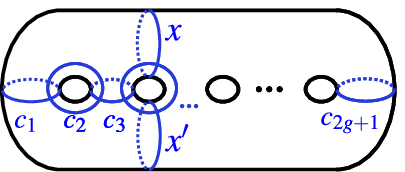}
\end{center}
From (\ref{BHMmonodromy_p}) we see that the monodromy factorization of the Lefschetz fibration $X_g(i)\to S^2$ is 
\begin{align}
1=& \begin{cases}
D_g E_g (t_{x})^{i+1}  (t_{x^\prime})^{i+1} (t_{c_1} t_{c_2} t_{c_3})^{4(g-i)} & (g \ \mathrm{odd}) \\
D_g E_g (t_{x})^{i+1}  (t_{x^\prime})^{i+1} (t_{c_1} t_{c_2} t_{c_3})^{4(g-1-i)+2} (t_{c_3} t_{c_2} t_{c_1})^2 & (g \ \mathrm{even}). 
\label{BHMmonodromy_f}
\end{cases}
\end{align}

As it will play a role later, we review Baykur, Hayano, and Monden's derivation of this monodromy factorization. They begin with the full chain relation $$t_{\delta_1} t_{\delta_1'} =(t_{c_1} t_{c_2} \cdots t_{c_{2g+1}})^{2g+2}$$ in $\Gamma_{g}^2$. This is well known to be the monodromy of a pencil with two base points on a complex surface $Z'_g$ of general type. Through a series of Lemmas, they show this is Hurwitz equivalent to the factorization 
\begin{align}
t_{\delta_1} t_{\delta_1^\prime} =& \begin{cases}
 D_g E_g (t_{c_1} t_{c_2} t_{c_3})^{4g}  (t_{c_5} t_{c_6} \cdots t_{c_{2g+1}})^{2g-2} & (g \ \mathrm{odd}) \\
D_g E_g (t_{c_1} t_{c_2} t_{c_3})^{4(g-1)+2} (t_{c_3} t_{c_2} t_{c_1})^2  (t_{c_5} t_{c_6} \cdots t_{c_{2g+1}})^{2g-2}  & ( g \ \mathrm{even}).
\label{BHMchain_Heq_p}
\end{cases}
\end{align}
They then apply unchaining monodromy substitutions to this factorization, $i$ times to the subword $(t_{c_1} t_{c_2} t_{c_3})^{4}$, and once to $(t_{c_5} t_{c_6} \cdots t_{c_{2g+1}})^{2g-2}$. In addition, a clever inductive use of the lantern relation shows that this relation has a lift from $\Gamma_g^2$ to $\Gamma_g^{2(i+1)}$, providing enough sections of the pencil to allow for the computation of the symplectic Kodaira dimension for some of the resulting 4-manifolds, and giving the relation (\ref{BHMmonodromy_p}).

We give separate proofs that $X'_g(i) \cong E(g-i)$ for $g$ odd and even. Each proof will have two stages: representing $X_g'(i)$ as 2-fold branched cover, followed by modifications of the base that realize the diffeomorphism.

\subsection{The proof for odd $g$}
\subsubsection{Representing $X_g'(i)$ as a branched covering.}
Let $\F_n$ denote the $n$th Hirzebruch surface. We begin by discussing how to represent $X'_g(i)$ for odd $g$ as the 2-fold branched cover of the rational surface $\F_{i+1}$, branched over an embedded surface. The base of the covering and the branch surface will be represented as a banded unlink diagram.

Recalling the derivation of the monodromy factorization (\ref{BHMmonodromy_p}) above, we discuss first the Lefschetz fibration $Z_g \to S^2$ that comes from blowing up the Lefschetz pencil defined by (\ref{BHMchain_Heq_p}). This Lefschetz fibration on $Z_g$ has monodromy given by the relation

\begin{equation} \label{BHMchain_Heq_f_odd}
D_g E_g (t_{c_1} t_{c_2} t_{c_3})^{4i} (t_{c_5} t_{c_6} \cdots t_{c_{2g+1}})^{2g-2} (t_{c_1} t_{c_2} t_{c_3})^{4(g-i)} = 1
\end{equation}
 in $\Gamma_g$. This is a hyperelliptic Lefschetz fibration, and from the discussion in Section~\ref{sec:Lefschetz}, we see that $Z_g$ can be described as the 2-fold cover of $\F_1$ branched over the surface described in Figure~\ref{base_odd_chains}. The visible part of the branch surface is the ribbon surface consisting of $2g+2$ horizontal disks together with the collection of bands $C_4$, $C_{2g-2}$, and $D_{2g+2}$ defined in Figures~\ref{Cbraid} and \ref{Dbraid}. (The exponents for $C_4$ denote repeated copies.)
\begin{figure}[ht]
\includegraphics[width=4.8in]{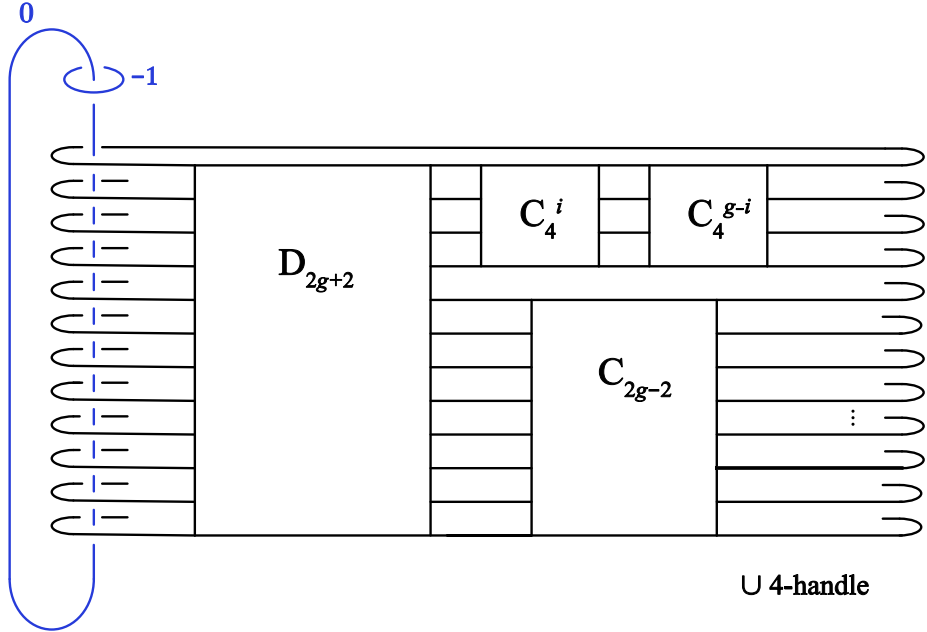}
\caption{The 2-fold branched cover is $Z_g$.}
\label{base_odd_chains}
\end{figure}
\begin{figure}[ht]
\includegraphics[width=3.8in]{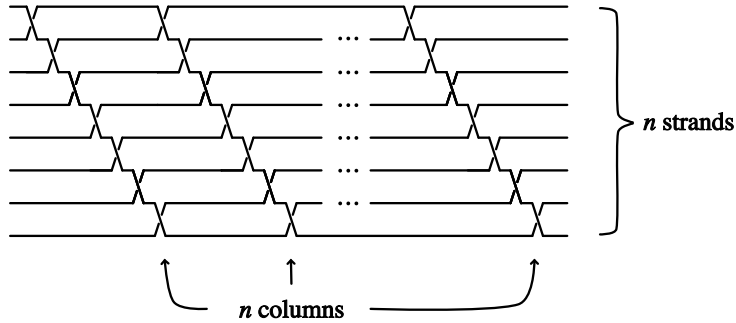}
\caption{The braid $C_n$}
\label{Cbraid}
\end{figure}
\begin{figure}[ht]
\includegraphics[width=3in]{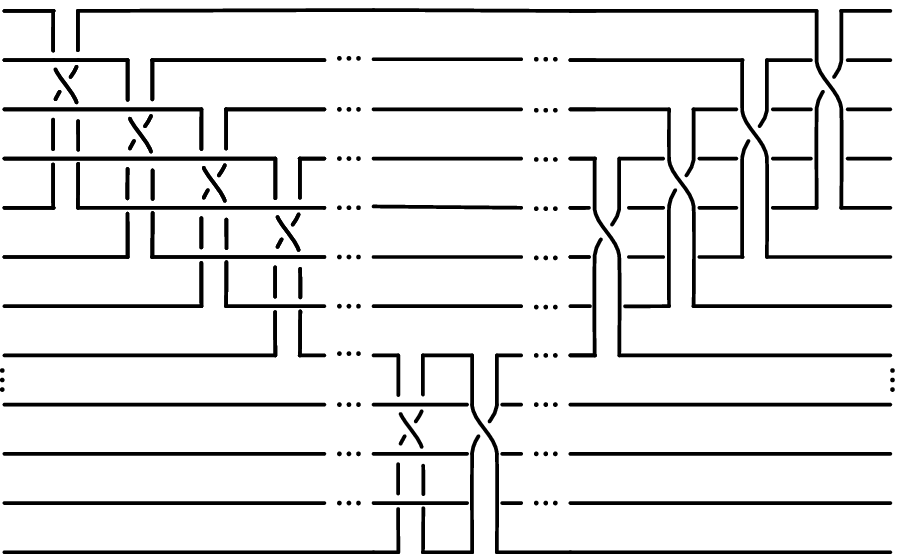}
\caption{The braid $D_n$, with $n$ strands.}
\label{Dbraid}
\end{figure}
The branched cover of the 0-handle union the 0-framed 2-handle branched over the ribbon surface is a Lefschetz fibration over $D^2$ with monodoromy given by (\ref{BHMchain_Heq_f_odd}). It can be checked directly using the Alexander method (see \cite{FarbMargalit}) that the projection of (\ref{BHMchain_Heq_f_odd}) to a homeomorphism of $\Sigma_{0,2g+2}$ equals a right-handed Dehn twist about a circle which encloses all marked branch points. This is isotopic to the identity by an isotopy that fixes all branch points throughout, providing a fiber-isotopy to the identity, as required. This isotopy also fixes a reference point $\ast\in \Sigma_{0,2g+2} \setminus B_{2g+2}^2$, and rotates a framed neighborhood of $\ast$ once in a left-handed direction. Thus if we attach the second 2-handle as shown in Figure~\ref{base_odd_chains}, along a meridian with framing $-1$, we match $2g+2$ disks to the boundary of the ribbon surface, and we see $Z_g$ as the cover of the surface given as a banded unlink diagram, as claimed.

We now consider unchaining substitutions on (\ref{BHMchain_Heq_f_odd}), $i$ times  on the subword $(t_{c_1} t_{c_2} t_{c_3})^{4i}$ and once on $(t_{c_5} t_{c_6} \cdots t_{c_{2g+1}})^{2g-2}$. Doing so yields Baykur, Hayano, and Monden's relation (\ref{BHMmonodromy_f}) that defines the Lefschetz fibration $X_g(i)$.  As described in \cite{Hyperelliptic}, we can realize this substitution pictorially by ``blowing up'' the chain boxes in Figure~\ref{base_odd_chains}; that is, by replacing them with $-1$-framed 2-handles, as shown in Figure~\ref{base_odd_0}. 
\begin{figure}[ht]
\begin{center}
\includegraphics[width=4.8in]{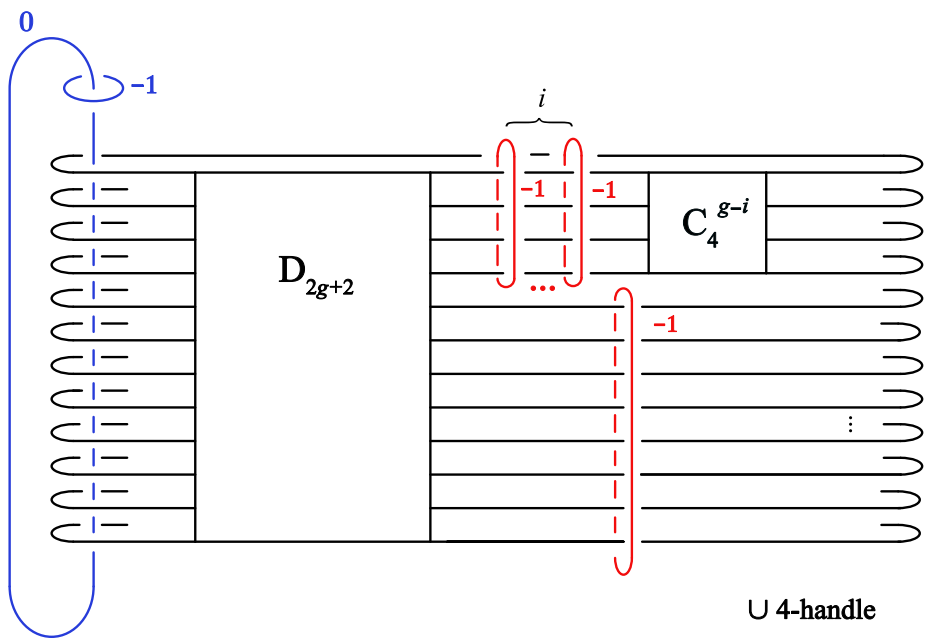}
\caption{The 2-fold branched cover is $X_g(i)$.}
\label{base_odd_0}
\end{center}
\end{figure}
Each of the newly introduced \mbox{2-handles} will lift to two 2-handles with relative product framing $-1$, attached along the pair of vanishing cycles $x$ and $x'$. This Figure still represents a banded unlink diagram, with $2g+2$ disks in the 4-handle, attached to the boundary of the ribbon surface. Thus $X_g(i)$ is the 2-fold cover of $\F_1 \# (i+1) \cpb$ branched over the surface shown in Figure~\ref{base_odd_0}.

We next execute a series of moves to the base of the branched covering. We begin by isotoping the newly added 2-handles by swinging them around the back of the ribbon surface so that they appear on the left, as in Figure~\ref{base_odd_1}. 
\begin{figure}[ht]
\begin{center}
\includegraphics[width=4.8in]{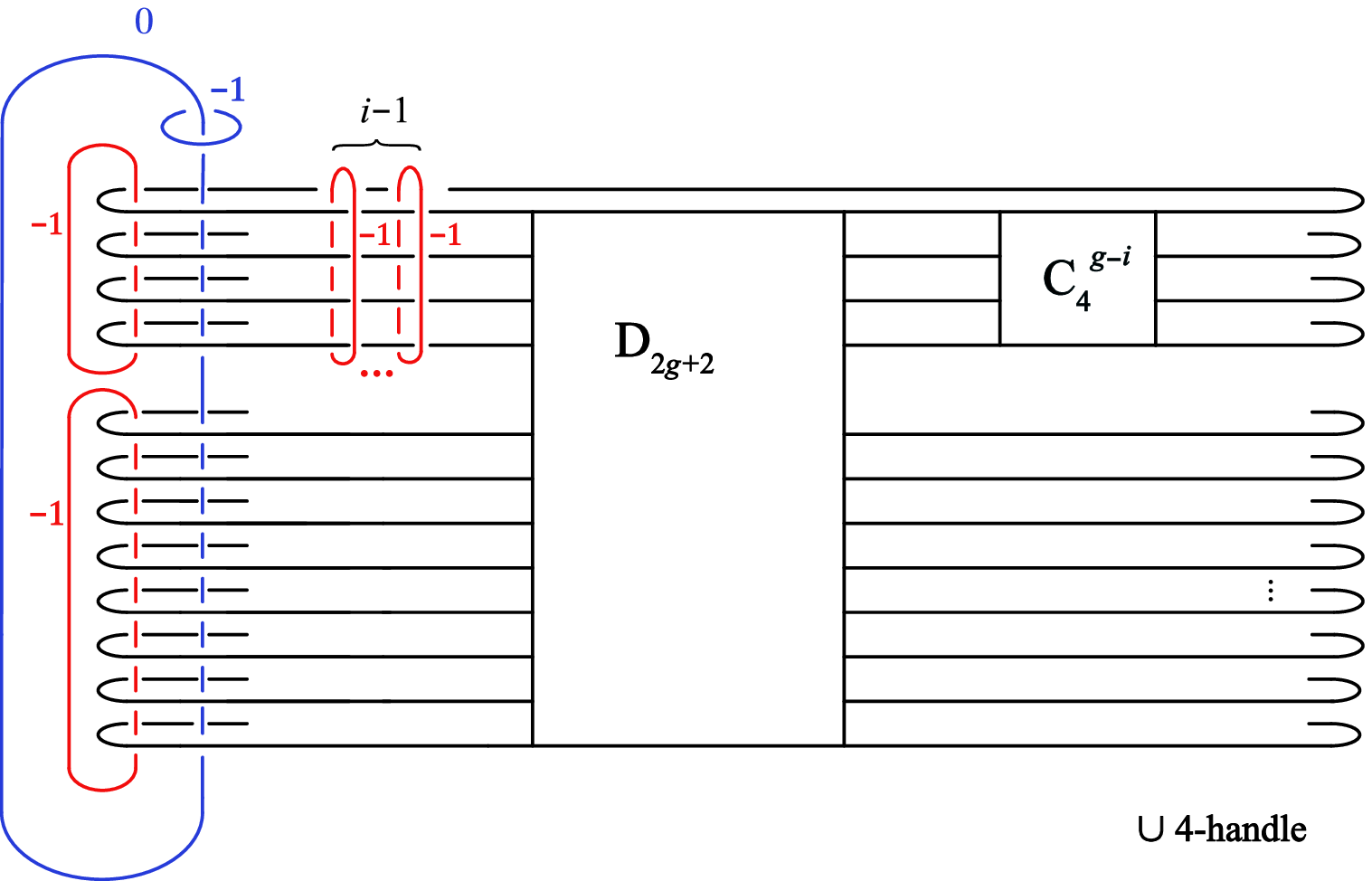}
\caption{}
\label{base_odd_1}
\end{center}
\end{figure}

We next slide the upper left $-1$-framed 2-handle over the lower one, producing Figure~\ref{base_odd_2}.
\begin{figure}[ht]
\begin{center}
\includegraphics[width=4.8in]{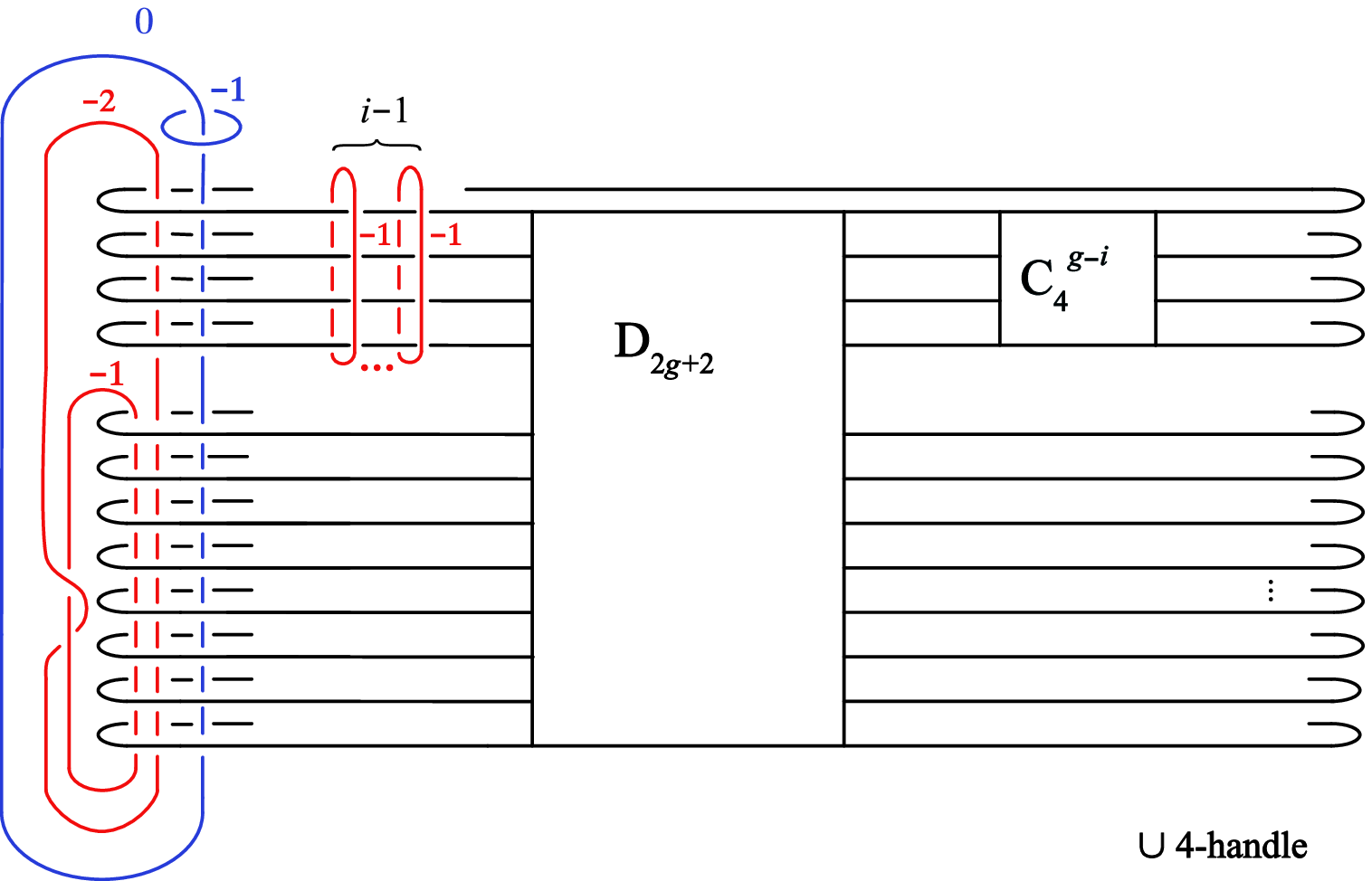}
\caption{}
\label{base_odd_2}
\end{center}
\end{figure}
Next the $-2$-framed 2-handle is slid over the parallel $0$-framed one, giving Figure \ref{base_odd_3}, and then slid over the $-1$-framed 2-handle that links it as a meridian. The result is Figure~\ref{base_odd_4}.

\begin{figure}[ht]
\begin{center}
\includegraphics[width=4.8in]{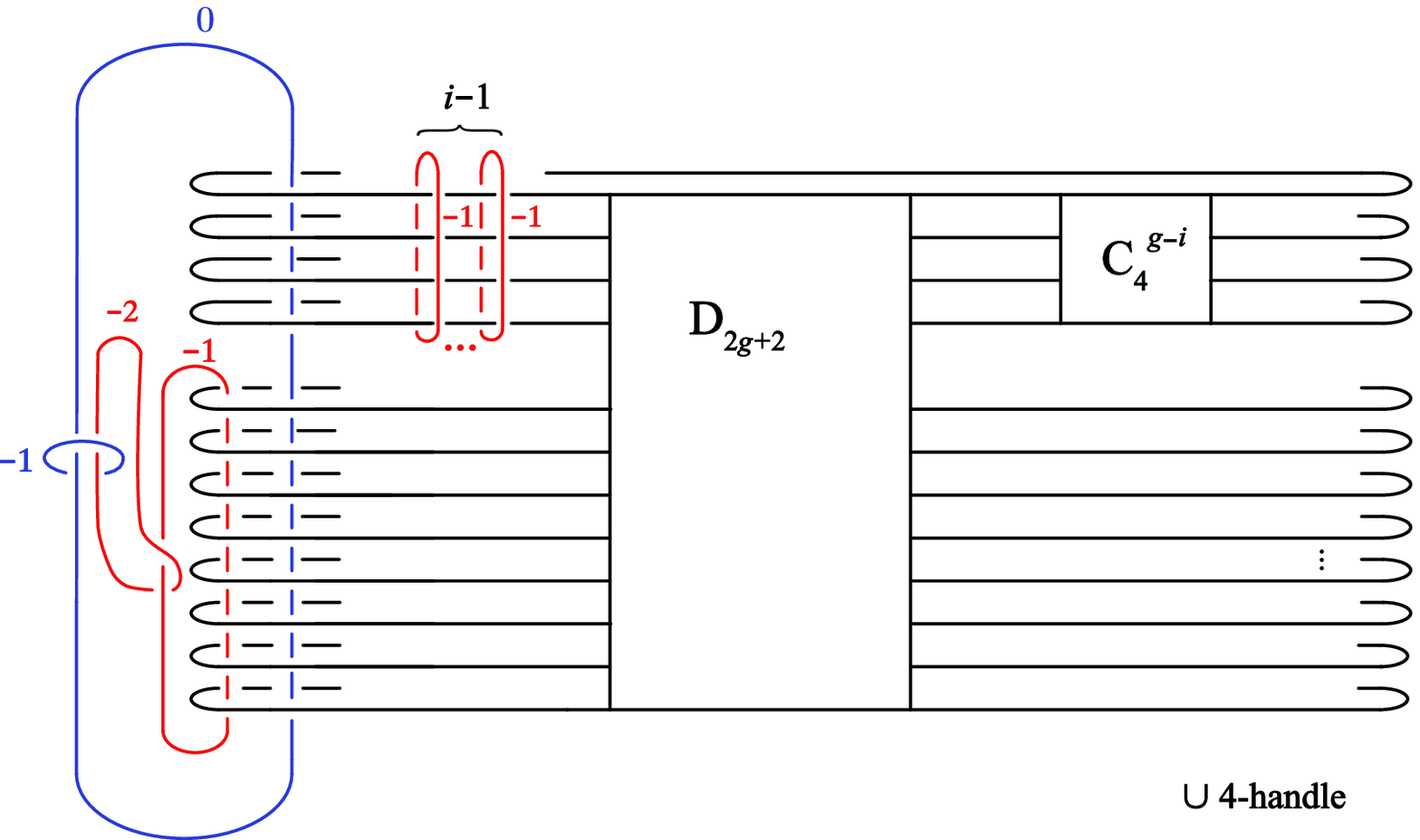}
\caption{}
\label{base_odd_3}
\end{center}
\end{figure}

\begin{figure}[ht]
\begin{center}
\includegraphics[width=4.8in]{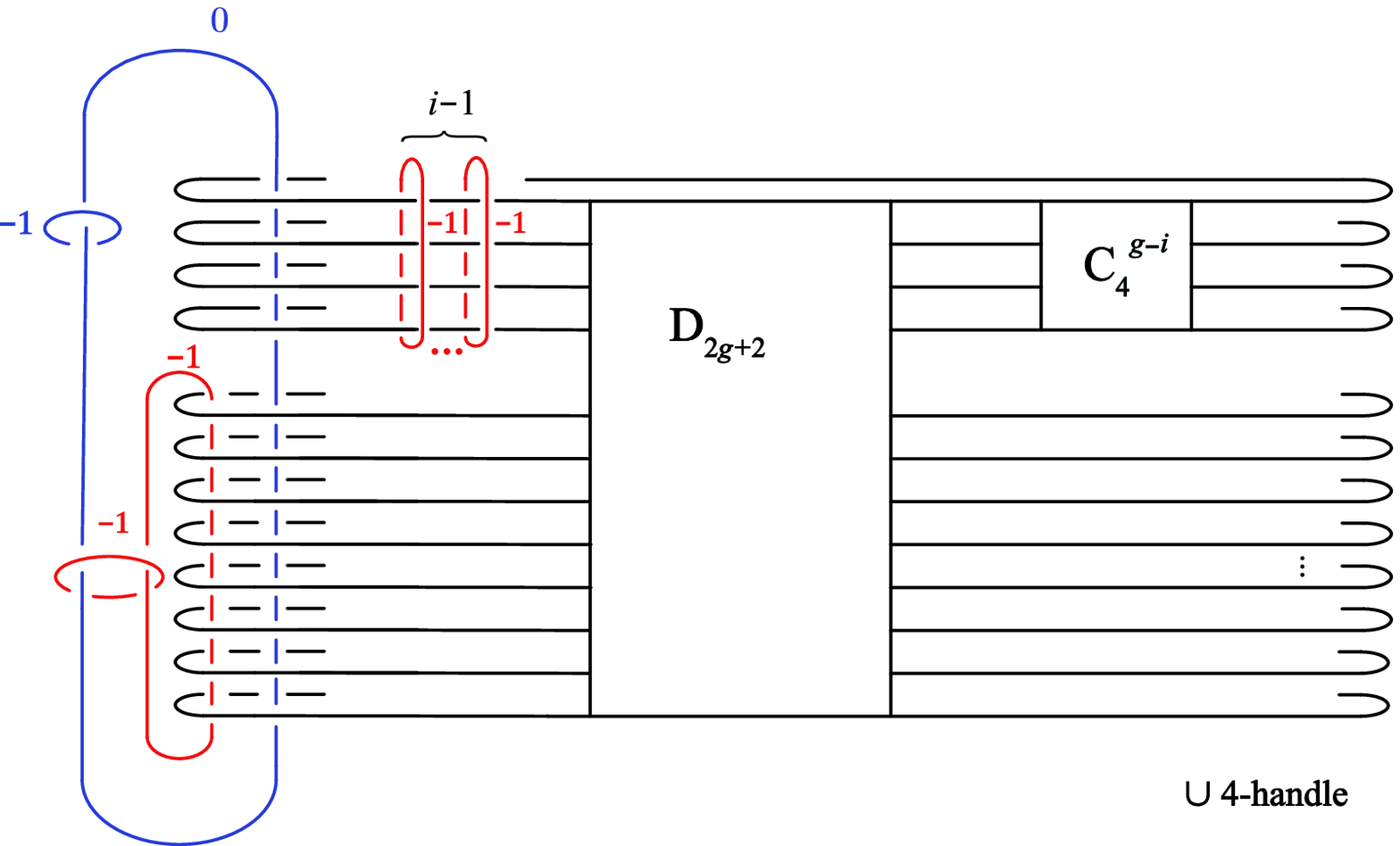}
\caption{}
\label{base_odd_4}
\end{center}
\end{figure}

We repeat this series of slides for each of the remaining $-1$-framed 2-handles at the top of the picture, resulting in Figure~\ref{base_odd_5}. 

\begin{figure}[ht]
\begin{center}
\includegraphics[width=5in]{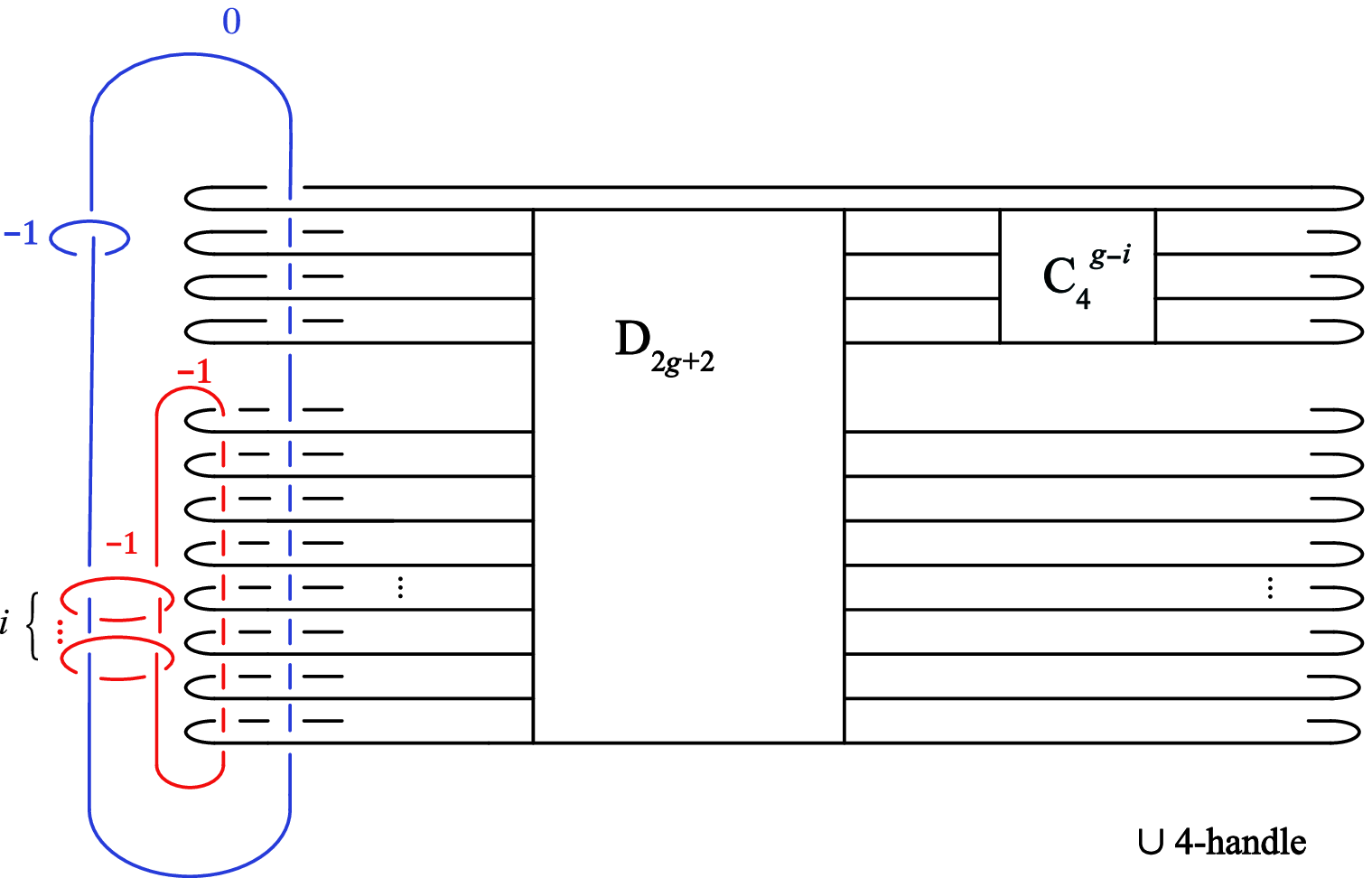}
\caption{}
\label{base_odd_5}
\end{center}
\end{figure}

Next we slide the lower $-1$-framed 2-handle over the blue $0$-framed 2-handle, then slide the result over the (blue) $-1$-framed 2-handle, giving Figure~\ref{base_odd_6}. Finally, we blow down each of the $-1$-framed 2-handles that link the $0$-framed 2-handle, to arrive at Figure~\ref{base_odd_7}.
\begin{figure}[ht]
\begin{center}
\includegraphics[width=4.8in]{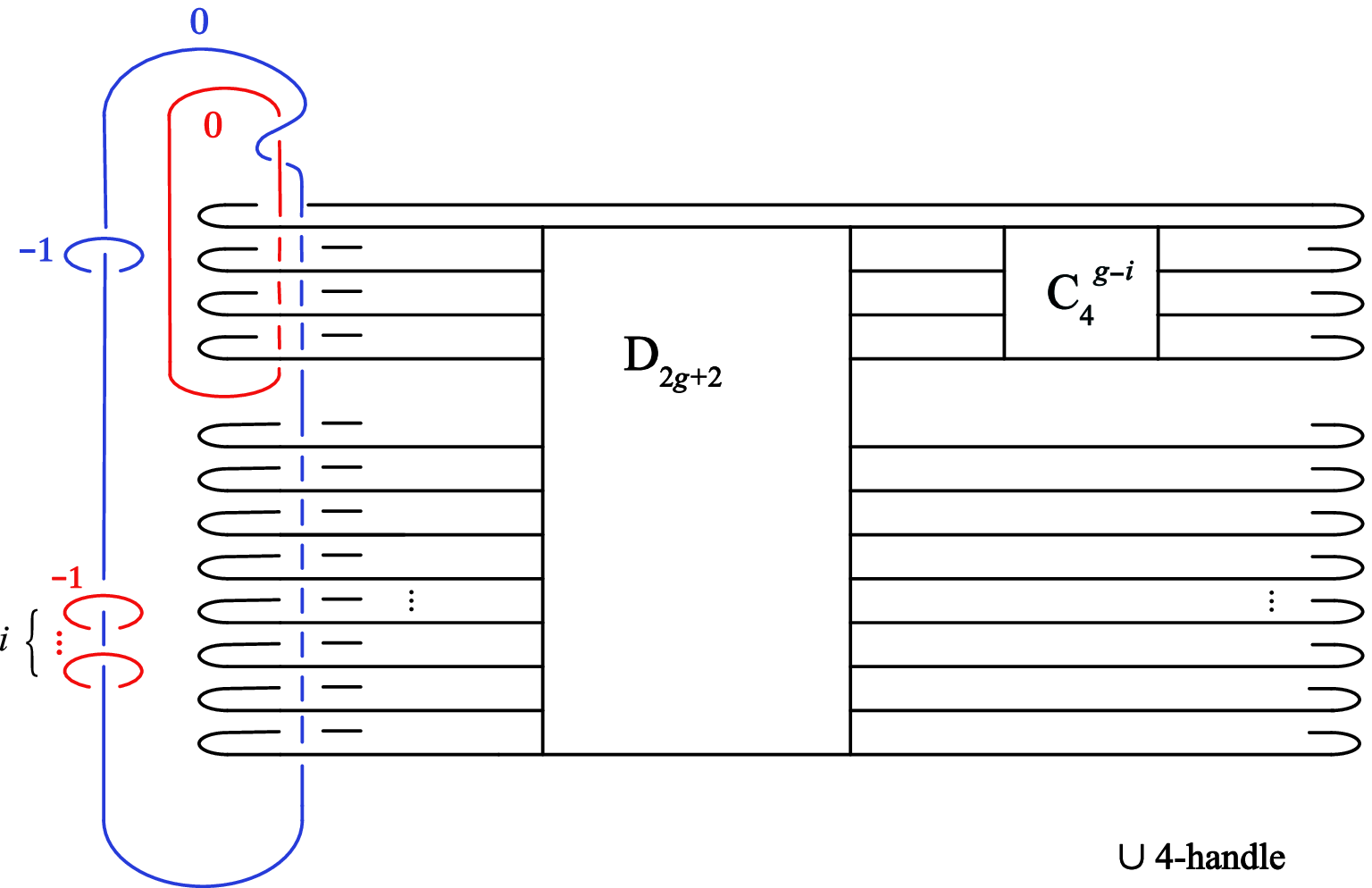}
\caption{}
\label{base_odd_6}
\end{center}
\end{figure}

\begin{figure}[ht]
\begin{center}
\includegraphics[width=5.5in]{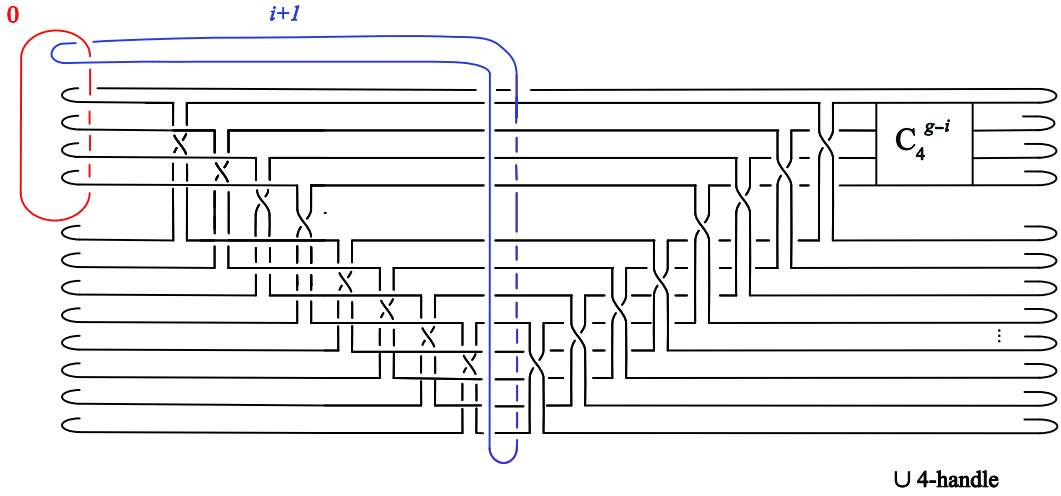}
\caption{The 2-fold branched cover is $X_g'(i)$.}
\label{base_odd_7}
\end{center}
\end{figure}

We pause here for an important observation: in this last step, each of the 2-handles that we are blowing down are attached along meridians to the 0-framed 2-handle. Retracing the diffeomorphism that goes between Figures \ref{base_odd_7} and \ref{base_odd_0}, we see that the spheres given by these handles will each lift to two sections of the Lefschetz fibration on $X_g(i)$, of square $-1$. Because we have blown down $2(i+1)$ sections of the fibration $X_g(i)$ with square $-1$, it follows that the 2-fold branched cover of $\F_{i+1}$ branched over the embedded surface described in Figure~\ref{base_odd_7} is $X_g'(i)$.

We next show that description of $X_g'(i)$ as the branched cover in Figure~\ref{base_odd_7} can be used to show that it is diffeomorphic to $E(g-i)$. This relies on a key lemma.

\subsubsection{A Key Lemma}
To set up the statement, let $F(R,S,T)$ denote any ribbon surface in the 4-manifold $\F_n$ of the form shown in Figure~\ref{lemma_0}. The box can represent any collection of bands, with the condition that any bands located there are attached to the top four horizontal disks, and avoid the disks below.

\begin{figure}[h!]
\begin{center}
\includegraphics[width=5.5in]{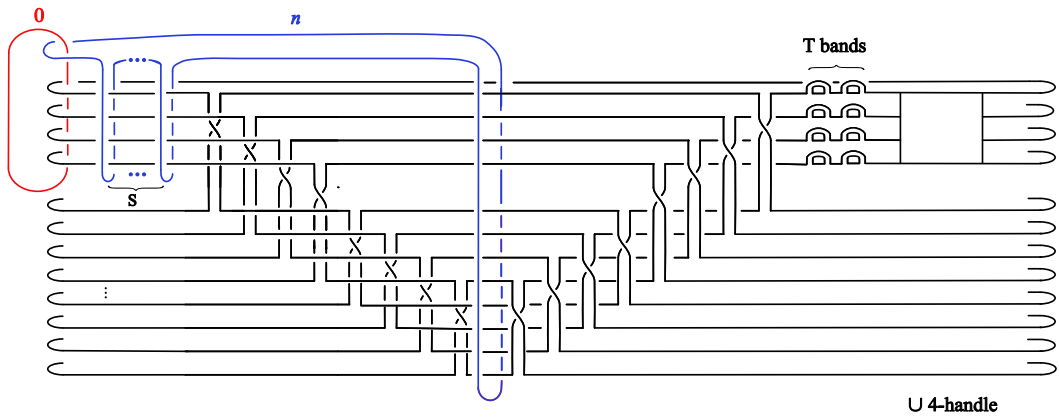}
\caption{The ribbon surface $F(R,S,T)$ has $R$ horizontal disks.}
\label{lemma_0}
\end{center}
\end{figure}
The notation records that: 
\begin{itemize}
\item the ribbon surface has $R$ horizontal disks;
\item the $n$-framed attaching circle links the horizontal disks $S$ times positively in the indicated region; and
\item there are $T$ trivial bands attached to the top four horizontal disks.
\end{itemize}
In applications of Lemma~\ref{recursive}, $T$ will be divisible by four, and the trivial bands will be distributed evenly among the top four horizontal disks.

\begin{Lem} \label{recursive} For $R \geq 8$, the ribbon surface $F(R,S,T)$ is isotopic to the ribbon surface \newline $F(R-4, S+1, T+4)$.
\end{Lem}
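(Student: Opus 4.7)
The plan is to realize the isotopy $F(R,S,T) \to F(R-4, S+1, T+4)$ as an explicit sequence of band moves and 2-handle band moves of the type catalogued in Section \ref{sec:BandedUnlink}. The hypothesis $R \geq 8$ guarantees that there are at least four horizontal disks strictly below the top four disks that host the $T$ bands, and it is these four ``middle'' disks that I intend to absorb into the top group.

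First I would use 2-handle band dives (Figure \ref{2h band dive}), read left-to-right, to swim the $n$-framed attaching circle lengthwise through each of the four middle disks in succession. This replaces each of those disks by a half-twisted band hooked through the attaching region of the 2-handle, and collectively contributes precisely one additional positive linking of the attaching circle with the remaining horizontal disks (so the $S$ index advances by exactly one). Next I would use band dives (Figure \ref{band dive}) together with ordinary band slides and swims from Figure \ref{buds} to drag each of the four newly created half-twisted bands upward until its endpoints lie on the top four horizontal disks, one band per disk. The half-twists introduced by the 2-handle dives are arranged to cancel in pairs under these subsequent band dives, so that each of the four migrated bands arrives at its destination as a genuinely trivial band, distributed one per top disk so as to preserve the ``evenly distributed'' convention recorded just before the Lemma.

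After this sequence the diagram has $R-4$ horizontal disks, the attaching circle has positive linking $S+1$ with them in the indicated region, and there are $T+4$ trivial bands among the top four disks, which is exactly $F(R-4, S+1, T+4)$. The locality of each dive (each takes place in a neighborhood disjoint from the box of $T$ bands and from the bottom $R-8$ disks) ensures that no auxiliary crossings or twists are introduced along the way.

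\textbf{Main obstacle.} The delicate point is the twist bookkeeping: I must verify that the half-twists generated by the four 2-handle band dives in the first stage are precisely canceled by the band dives of the second stage, so that the resulting four bands are trivial rather than carrying residual half-twists, and that no change in framing is imposed on the $n$-framed attaching circle. I expect this to be a direct check carried out by tracking the half-twists through each elementary move of Figure \ref{buds}, with no conceptual obstacle beyond careful local computation.
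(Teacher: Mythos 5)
Your overall target is right, but the mechanism you propose for removing the four disks is not a legitimate sequence of moves, and this is where the actual content of the proof lives. A 2-handle band dive (Figure~\ref{2h band dive}) passes the attaching circle of a 2-handle lengthwise through a band; it does not, and cannot, ``replace a horizontal disk by a half-twisted band.'' There is no move in the banded unlink calculus that converts an unlink component into a band. The only way to delete a horizontal disk is to cancel it against a band, and that cancellation is only available once band slides and band dives have arranged that the disk meets exactly one band, trivially. The paper's proof is organized around exactly this: a single 2-handle band dive of the $n$-framed circle (which is what raises the linking from $S$ to $S+1$, once, at the outset), followed by, for each of the four bottom disks in turn, a band slide and a band dive that isolate that disk with a single attached band, a disk--band cancellation, and then a 2-handle band slide over the $0$-framed handle that disengages the leftover band from the top four disks and renders it trivial. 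The four new trivial bands arise from these 2-handle band slides, not from half-twists cancelling in pairs.

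Your linking bookkeeping is also unjustified: you perform four separate 2-handle dives and assert they ``collectively contribute precisely one additional positive linking,'' whereas each such dive individually changes how the attaching circle threads the diagram; the paper needs exactly one dive to realize $S\mapsto S+1$, and doing four would not net out to one without further argument. Finally, the ``main obstacle'' you flag (cancellation of residual half-twists) is not where the difficulty sits; the difficulty is producing a disk with a single band on it so that it can be cancelled at all, and then accounting for the band that each cancellation leaves behind. As written, your first stage performs an operation that is not an isotopy of the surface, so the argument does not go through.
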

 
\begin{proof}
Beginning with $F(R,S,T)$ as shown in Figure~\ref{lemma_0}, we obtain  Figure~\ref{lemma_1} by a 2-handle band dive of the $n$-framed 2-handle. This increases the linking in the upper left of the picture to $S+1$.
\begin{figure}[ht]
\begin{center}
\includegraphics[width=6in]{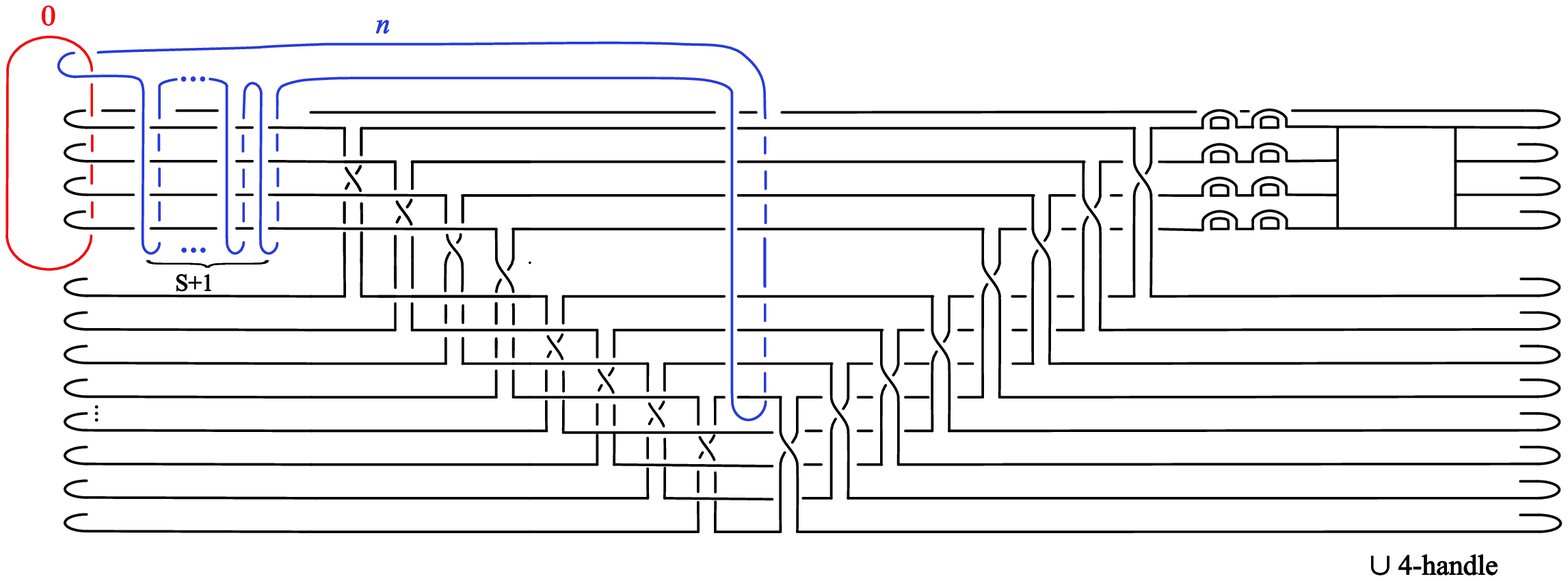}
\caption{}
\label{lemma_1}
\end{center}
\end{figure}
A band slide results in Figure~\ref{lemma_2}, and a band dive of that same band gives Figure~\ref{lemma_3}.
\begin{figure}[ht]
\begin{center}
\includegraphics[width=6in]{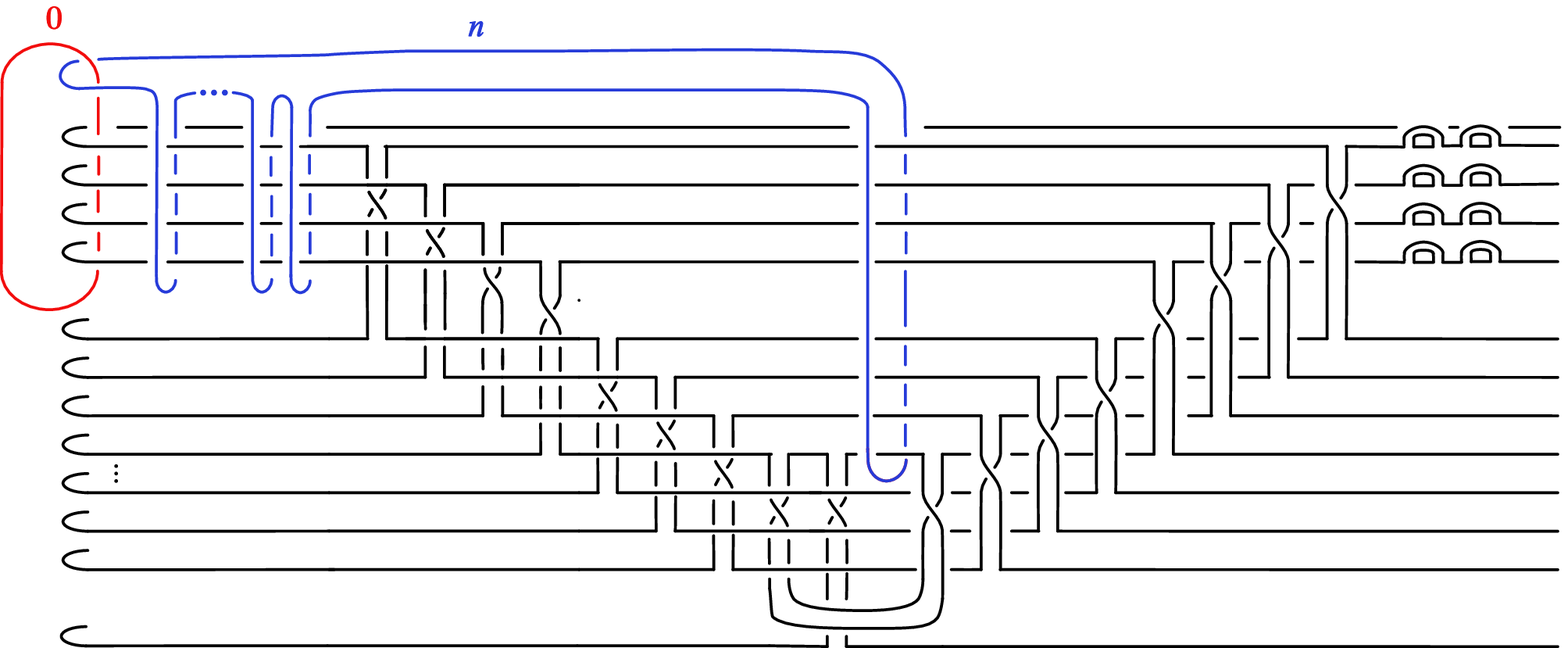}
\caption{}
\label{lemma_2}
\end{center}
\end{figure}

\begin{figure}[ht]
\begin{center}
\includegraphics[width=6in]{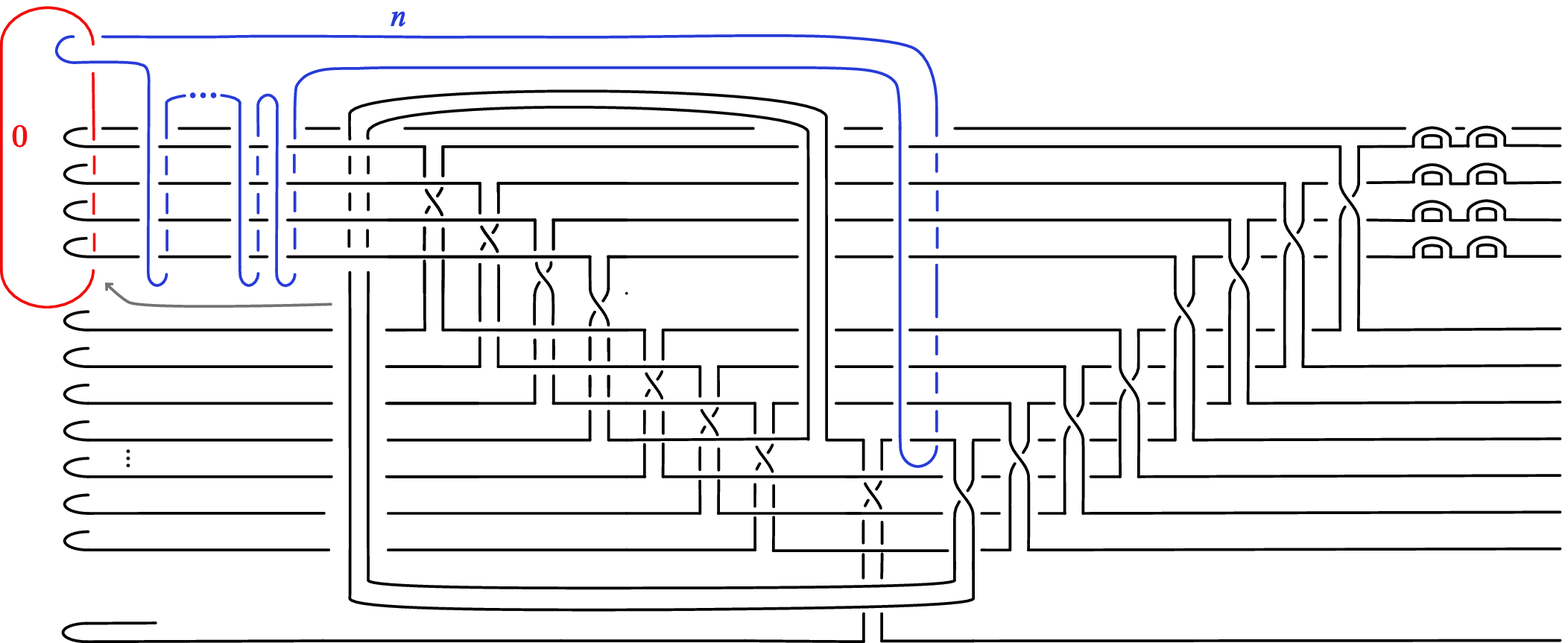}
\caption{}
\label{lemma_3}
\end{center}
\end{figure}
At this point, we may cancel the bottom horizontal disk with the remaining attached band. In addition, we do a 2-handle band slide over the $0$-framed 2-handle, using a band indicated by the grey arrow; the slide disengages the band from the top four horizontal disks, and it can be isotoped to the trivial band shown in Figure~\ref{lemma_4}. 
\begin{figure}[ht]
\begin{center}
\includegraphics[width=6in]{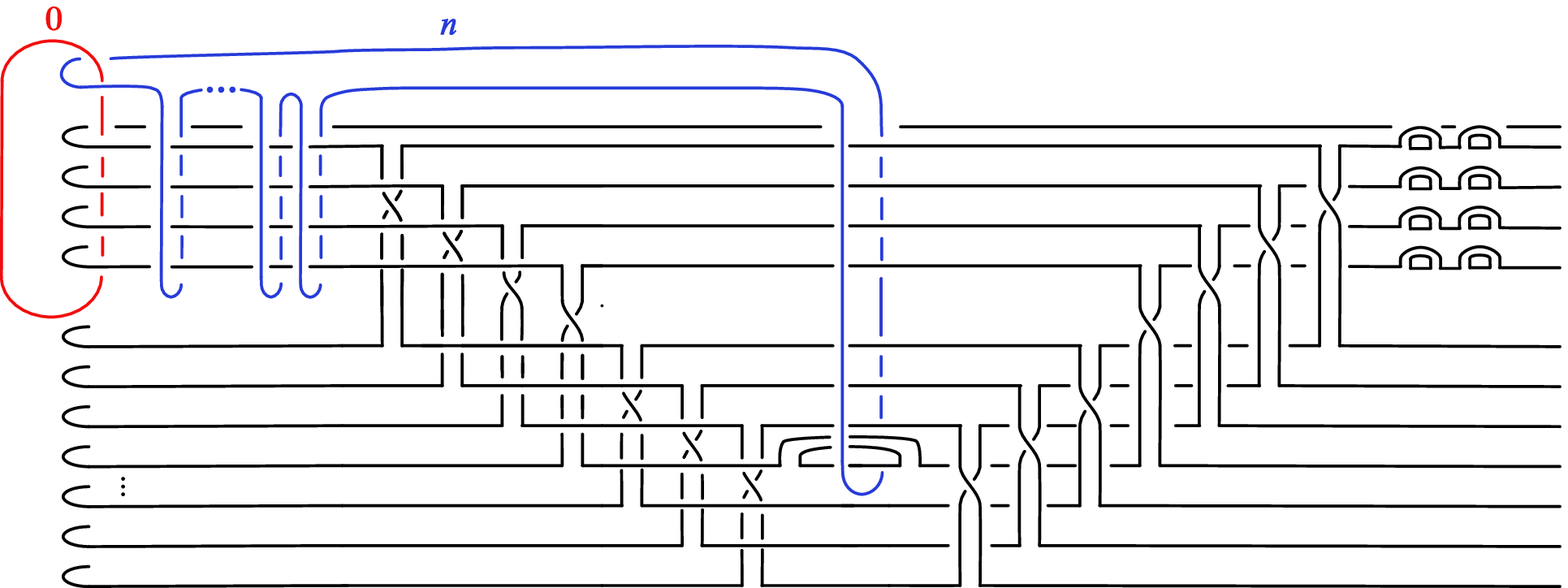}
\caption{}
\label{lemma_4}
\end{center}
\end{figure}

The transition from Figure~\ref{lemma_1} to Figure~\ref{lemma_4} resulted in the cancellation of the bottom horizontal disk, and added a trivial band in the process. We can repeat these steps three times to remove the bottom three horizontal disks, as shown in Figure~\ref{lemma_5}. 
\begin{figure}[ht]
\begin{center}
\includegraphics[width=5in]{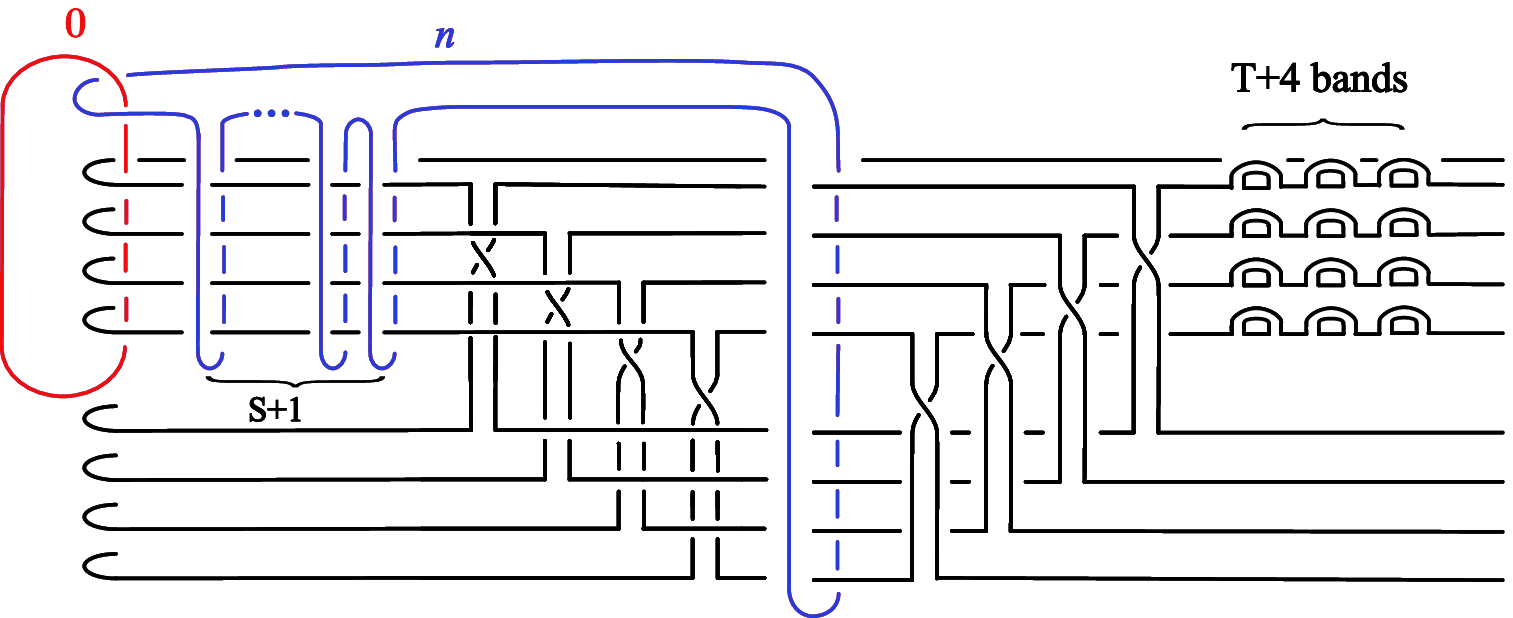}
\caption{}
\label{lemma_5}
\end{center}
\end{figure}
In this Figure, we have moved the trivial bands from their position in Figure~\ref{lemma_4}, by sliding them over the long bands to their right, so that they are now attached to the top four disks. In total we have removed the four bottom horizontal disks, and added four trivial bands; thus the values of $R$ and $T$ change to $R-4$ and $T+4$, respectively. This completes the proof of Lemma~\ref{recursive}.
\end{proof}

\subsubsection{An isotopy of the branch surface}
Let $g=2k+1$. Returning to the proof of Theorem~\ref{main}, Figure~\ref{base_odd_7} shows that $X_g'(i)$ is diffeomorphic to the 2-fold branched cover of $\F_{i+1}$ branched over a surface of the form  $F(2g+2,0,0)=F(4k+4, 0,0)$. Then $k$ iterations of Lemma~\ref{recursive} give that $X_g'(i)$ is diffeomorphic to the 2-fold branched cover branched over a surface of the form $F(4,k,4k)$. Recall that the full surface in Figure~\ref{base_odd_7} includes $2g+2$ unseen disks attached to the boundary of the ribbon surface, with their interiors in the 4-handle. Using $4k=2g-2$ of these disks to cancel the trivial bands, we have that $X'_g(i)$ is diffeomorphic to the cover of the manifold in Figure~\ref{base_odd_9}. (Note that 4 disks remain in the 4-handle.) 
\begin{figure}[ht]
\centering
\begin{minipage}{.45\textwidth}
  \centering
  \includegraphics[width=\textwidth]{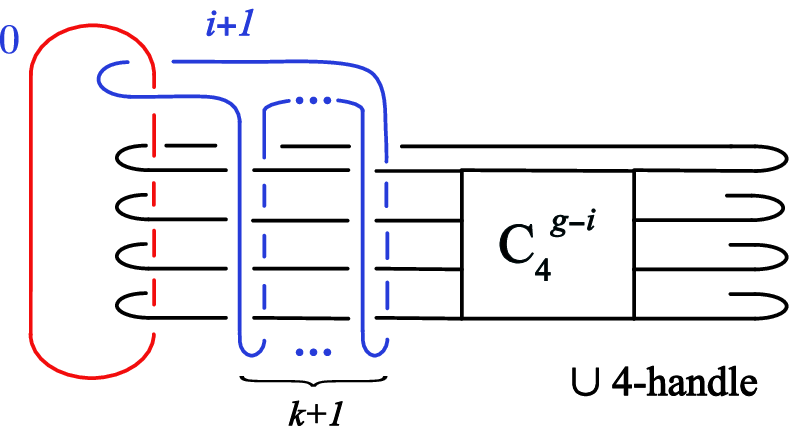}
  \caption{}
  \label{base_odd_9}
\end{minipage} \hfill
\begin{minipage}{.45\textwidth}
  \centering
  \includegraphics[width=.8\textwidth]{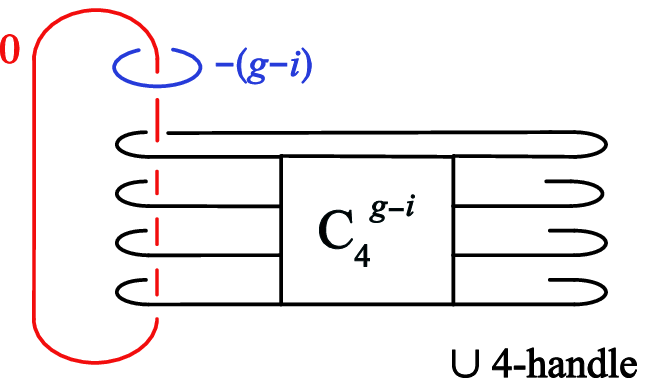}
  \caption{}
  \label{base_odd_10}
\end{minipage}
\end{figure}
We arrive at Figure~\ref{base_odd_10} by sliding the $(i+1)$-framed 2-handle over the 0-framed one $k+1$ times.
The new framing is $(i+1)-2(k+1)=i-2k-1=-(g-i)$, as shown. 
 
The proof for odd $g$ is completed by recognizing that the branched cover of $\F_{g-i}$ over the surface in Figure~\ref{base_odd_10} is $E(g-i)$. This is immediate from the discussion in Section~\ref{sec:Lefschetz}. The lift of the branched cover of the 0-handle union the 0-framed 2-handle branched over the ribbon surface is a genus 1 Lefschetz fibration over $D^2$ with monodromy $(t_{c_1} t_{c_2} t_{c_3})^{4(g-i)}$. The braid $(d_{\pi(c_1)} d_{\pi(c_2)} d_{\pi(c_3)})^{4(g-i)}$ is equal to $g-i$ full right-handed Dehn twists about a circle enclosing all branch points. This isotopy of this to the identity fixes a reference point in $\Sigma_{0,4} \setminus B_4^2$ while rotating a framed neighborhood $g-i$ times in a left-handed direction. Thus adding a 2-handle with the indicated location and framing shows that the branched cover of $\F_{g-i}$ over the rest of the surface extends to a total space which is a genus 1 Lefschetz fibration over $S^2$, whose monodromy matches a well known factorization of $E(g-i)$. 

\subsection{The proof for even $g$}

The proof for even $g$ is essentially the same as for odd $g$. However, because $2g+2$ is no longer divisible by four, we must include two additional iterations of the basic moves used in the proof of Lemma~\ref{recursive}. Also, because the different form of the monodromy of $X_g'(i)$ makes for a different ribbon branch surface, the final step of recognizing the total space of the cover as an elliptic surface is somewhat different.

 As a starting point for even $g$, we begin with the Lefschetz fibration on $Z_g$, which from  (\ref{BHMchain_Heq_p}) has a mondoromy factorization given by the relation
\begin{equation} \label{BHMchain_Heq_f_even}
D_g E_g (t_{c_1} t_{c_2} t_{c_3})^{4i} (t_{c_5} t_{c_6} \cdots t_{c_{2g+1}})^{2g-2} (t_{c_1} t_{c_2} t_{c_3})^{4(g-i-1)} (t_{c_1} t_{c_2} t_{c_3})^{2} (t_{c_3} t_{c_2} t_{c_1})^{2} = 1.
\end{equation}
As before, this hyperelliptic Lefschetz fibration can be described as the 2-fold cover of $\F_1$ branched over the surface described in Figures~\ref{base_even_chains} and \ref{Ebraid}. 
\begin{figure}[ht]
\includegraphics[width=5in]{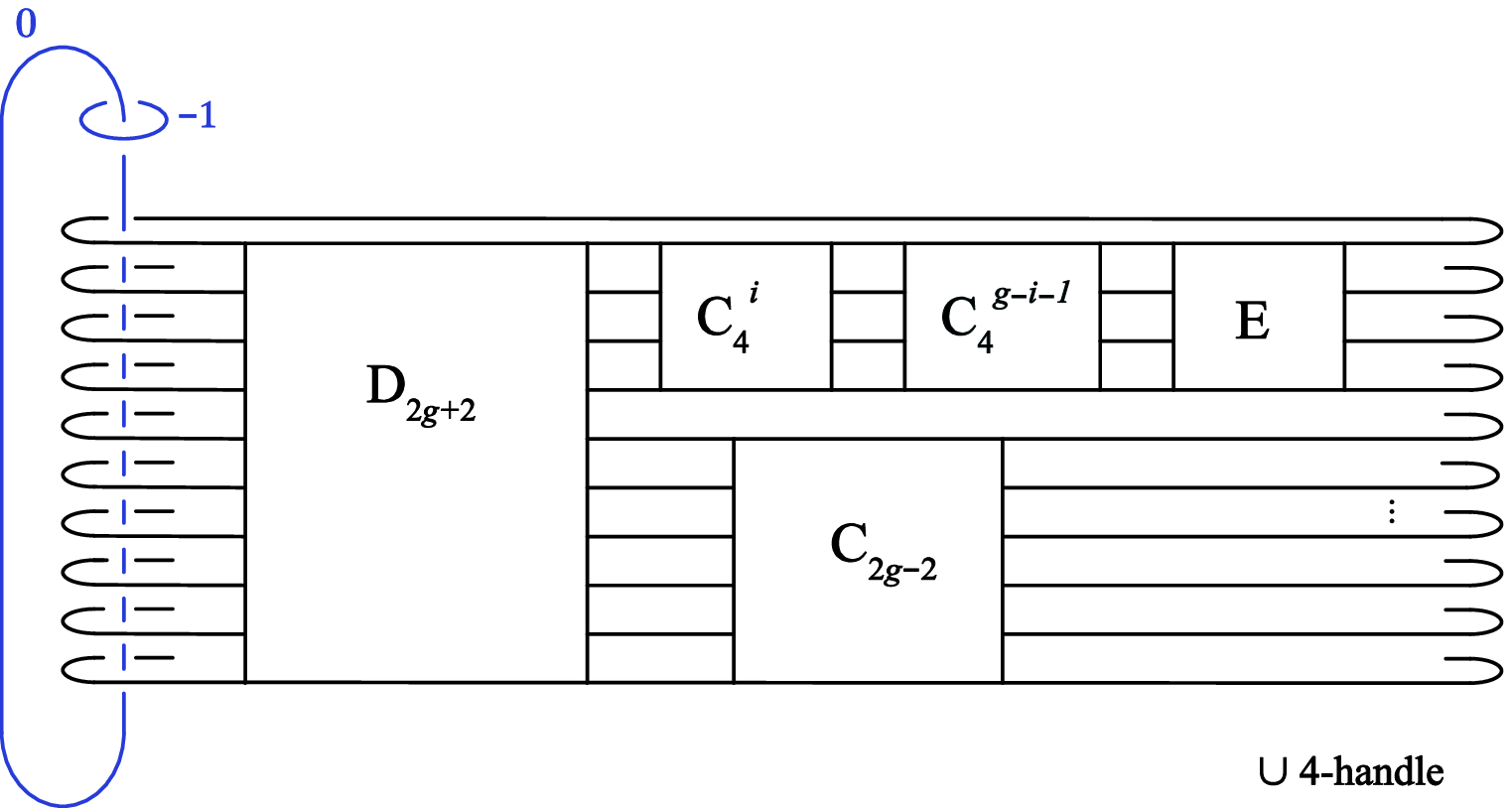}
\caption{}
\label{base_even_chains}
\end{figure}
\begin{figure}[ht]
\includegraphics{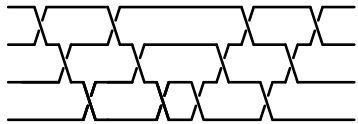}
\caption{The ribbon surface $E$.}
\label{Ebraid}
\end{figure}
Once again, it can be checked that the projection of (\ref{BHMchain_Heq_f_even}) to a homeomorphism of $\Sigma_{0,2g+2}$ equals a right-handed Dehn twist about a circle that encloses all marked branch points. The unseen part of the branch surface is $2g+2$ disks attached to the boundary of the ribbon surface, with interiors in the 4-handle, exactly as in the odd $g$ case. Thus Figure~\ref{base_even_chains} depicts a banded unlink diagram, as before.

Performing unchaining monodromy substitutions gives that $X_g(i)$ is the 2-fold cover of $\F_1 \# (i+1) \cpb$, branched over the surface in Figure~\ref{base_even_0}. 
\begin{figure}[ht]
\begin{center}
\includegraphics[width=5in]{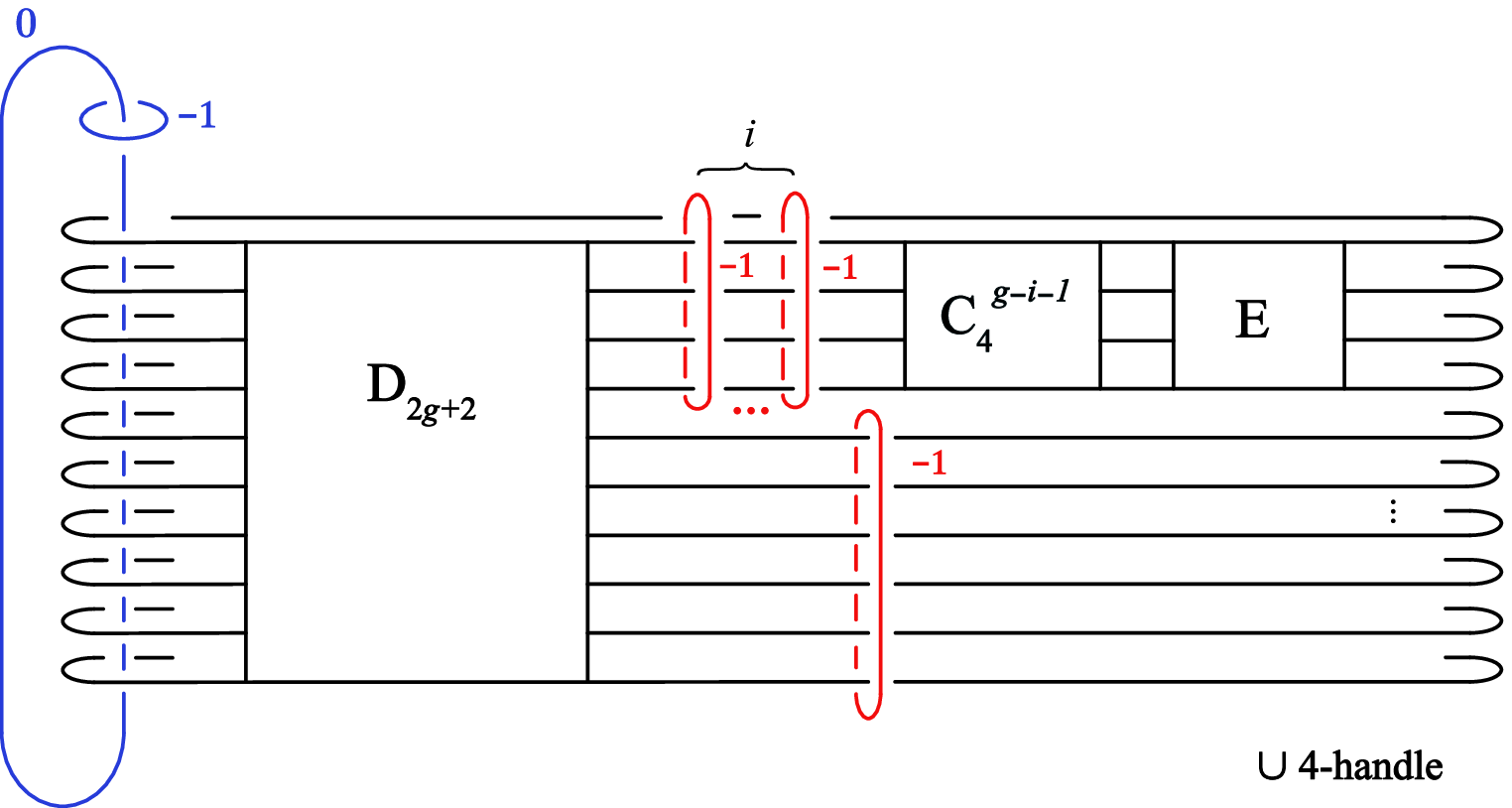}
\caption{}
\label{base_even_0}
\end{center}
\end{figure}
Mimicking the 2-handle slides from the odd case yields $2(i+1)$ sections which are blown down to give $X_g'(i)$ as the 2-fold cover of $\F_{i+1}$ branched over the surface in Figure~\ref{base_even_7}.
\begin{figure}[ht]
\begin{center}
\includegraphics[width=6in]{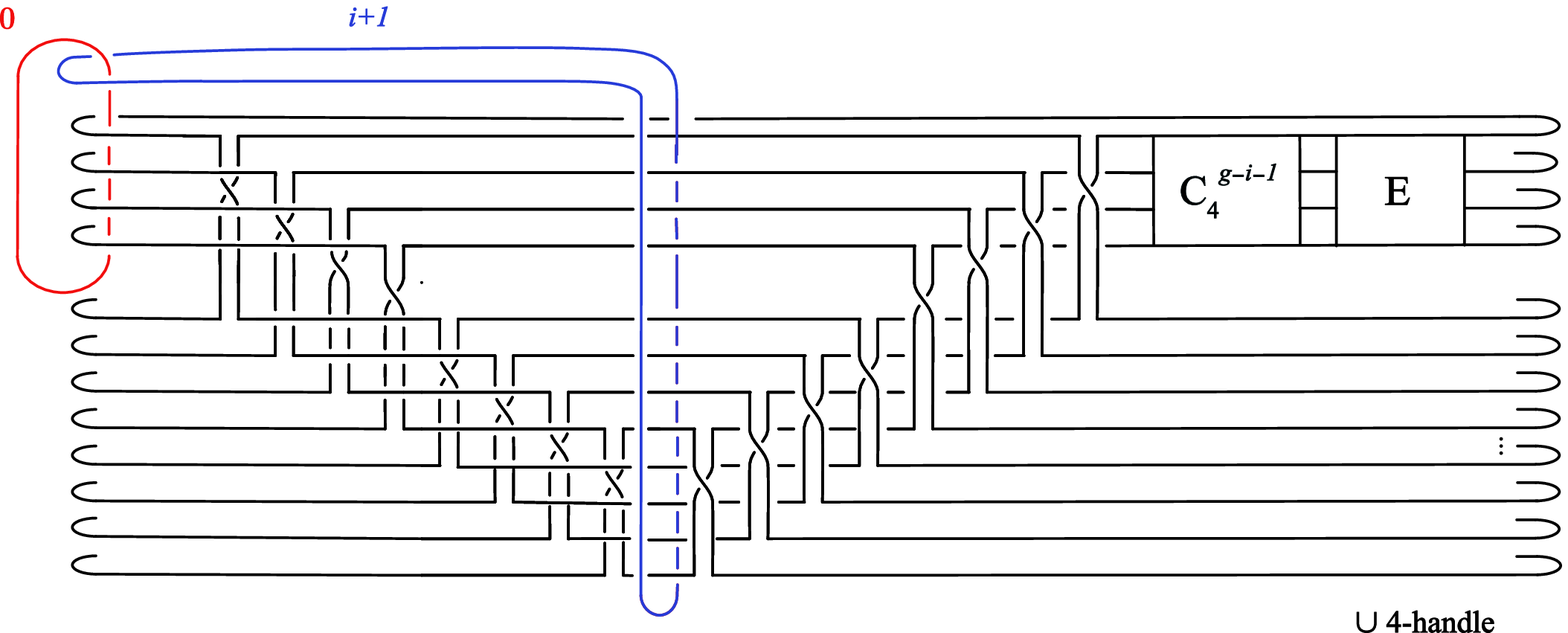}
\caption{}
\label{base_even_7}
\end{center}
\end{figure}

Let $g=2k+2$. The ribbon surface in Figure~\ref{base_even_7} is of the form $F(2g+2,0,0)=F(4k+6,0,0)$. Then $k$ iterations of Lemma~\ref{recursive} give that $X_g'(i)$ is diffeomorphic to the cover of $\F_{i+1}$ branched over $F(6,k,4k)$, shown in Figure~\ref{base_even_8}. 
\begin{figure}[ht]
\begin{center}
\includegraphics[width=4in]{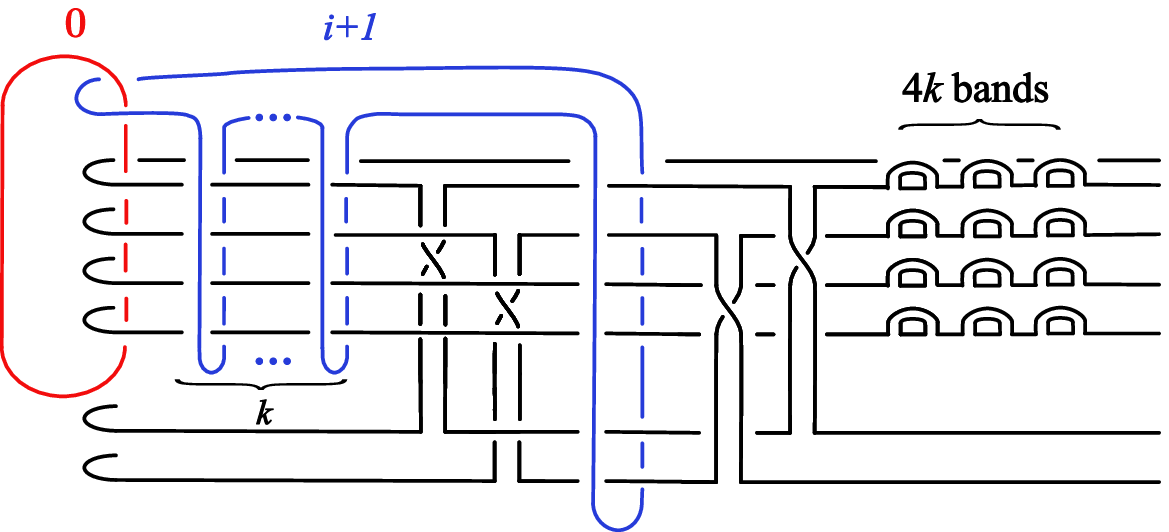}
\caption{}
\label{base_even_8}
\end{center}
\end{figure}

We next cancel the bottom two horizontal disks as follows. A 2-handle band dive gives Figure~\ref{base_even_9}.
\begin{figure}[ht]
\begin{center}
\includegraphics[width=4in]{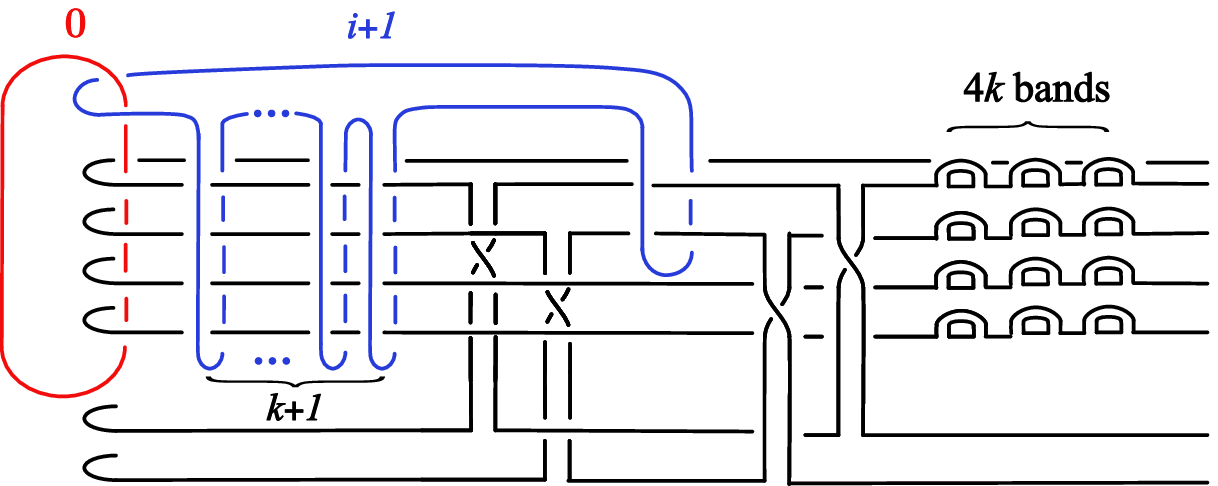}
\caption{}
\label{base_even_9}
\end{center}
\end{figure}
We can then twice more use the sequence of moves in the proof of Lemma~\ref{recursive}: a band slide, followed by a band dive, followed by a 2-handle band slide. (See the transition from Figure~\ref{lemma_1} to Figure~\ref{lemma_4}.) This adds two more trivial bands to the picture, but we cancel all $4k+2=2g-2$ of them using disks from the 4-handle. This results in Figure~\ref{base_even_10}. 
\begin{figure}[ht]
\centering
\begin{minipage}{.45\textwidth}
  \centering
  \includegraphics[width=\textwidth]{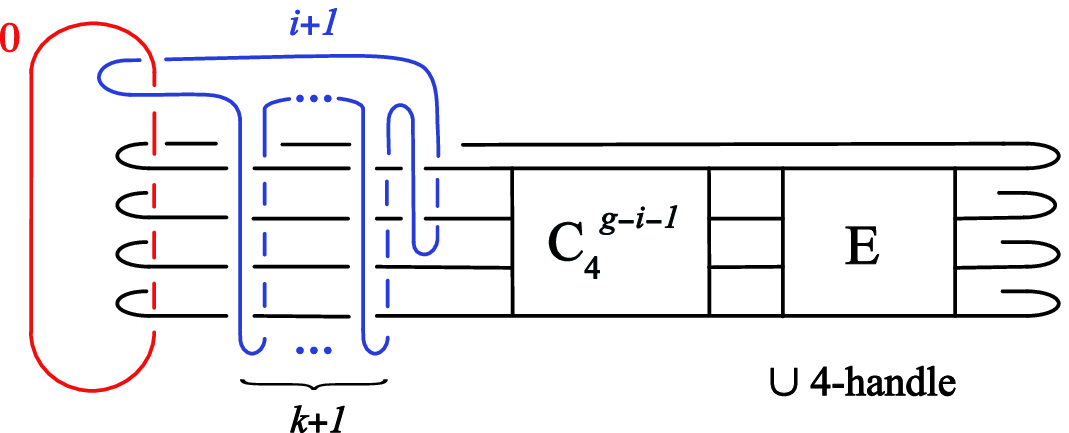}
  \caption{}
  \label{base_even_10}
\end{minipage} \hfill
\begin{minipage}{.45\textwidth}
  \centering
  \includegraphics[width=.85\textwidth]{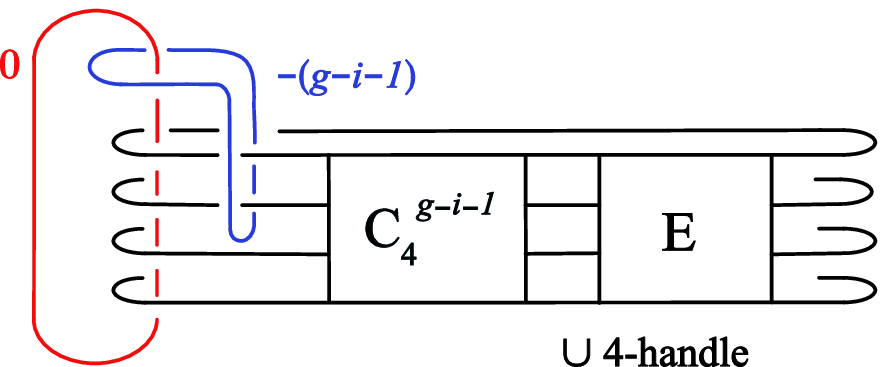}
  \caption{}
  \label{base_even_11}
\end{minipage}
\end{figure}

We next slide the $(i+1)$-framed 2-handle $k+1$ times over the 0-framed handle. The new framing is $(i+1)-2(k+1)=i-2k-1=-(g-i-1)$. This is Figure~\ref{base_even_11}. 

It remains to see that the branched cover described by Figure~\ref{base_even_11} is $E(g-i)$. The lift of the branched cover of the 0-handle union the 0-framed 2-handle branched over the ribbon surface is a genus 1 Lefschetz fibration over $D^2$ with monodromy $$(t_{c_1} t_{c_2} t_{c_3})^{4(g-i-1)} (t_{c_1} t_{c_2} t_{c_3})^2 (t_{c_3} t_{c_2} t_{c_1})^2.$$ The location and framing of the other attaching circle is explained by tracking a framed neighborhood of a reference point $\ast\in\Sigma_{0,2g+2}\setminus B_{2g+2}^2$ through an isotopy from the braid $$(d_{\pi(c_1)} d_{\pi(c_2)} d_{\pi(c_3)})^{4(g-i-1)} (d_{\pi(c_1)} d_{\pi(c_2)} d_{\pi(c_3)})^2 (d_{\pi(c_3)} d_{\pi(c_2)} d_{\pi(c_1)})^2$$ to the identity. This isotopy first undoes $g-i-1$ right handed Dehn twists, which fixes $\ast$ while rotating its neighborhood $g-i-1$ times oppositely; followed by an isotopy that pushes $\ast$ around a circle passing through the middle two marked points without twisting its neighborhood. Thus the branched cover of $\F_{g-i-1}$ extended over the rest of the surface gives a total space which is a genus 1 Lefschetz fibration over $S^2$. Finally, we note that the monodromy factorization of this fibration is easily seen to be equivalent to other well-known factorizations for elliptic fibrations on $E(g-i)$.

\end{document}